\documentclass[hidelinks,onefignum,onetabnum,nohypdvips]{siamart220329}

\usepackage{amsmath,amssymb,mathtools,mathrsfs,bm}
\usepackage{enumitem}
\usepackage{microtype}
\usepackage{cite}

\headers{Feedback Capability for Vector-Valued Systems}{Z. Liu}

\title{A Game-Theoretic Characterization of Feedback Capability for Fully Coupled Vector-Valued Nonparametric Systems\thanks{\textbf{Funding:} This work was supported in part by the National Natural Science Foundation of China under Grant 12401585, the Guangdong Basic and Applied Basic Research Foundation under Grant 2024A1515011542, and the General Program of the Shenzhen Natural Science Foundation under Grant JCYJ20250604181037012.}}

\author{ZHAOBO LIU\thanks{Institute for Advanced Study, Shenzhen University,
Shenzhen, Guangdong 518060, China. Email: \email{liuzhaobo@szu.edu.cn}.}}

\newcommand{\R}{\mathbb{R}}

\newcommand{\Env}{\operatorname{Env}}
\newcommand{\Esc}{\operatorname{Esc}}
\newcommand{\GL}{\operatorname{gL}}

\newcommand{\Rad}{\operatorname{rad}}
\makeatletter
\providecommand{\headerps@out}[1]{}
\@ifundefined{XeTeXrevision}{}{%
  \DeclareRobustCommand{\siam@xetex@url}[1]{\href{#1}{\texttt{\detokenize{#1}}}}%
  \let\url\siam@xetex@url
}
\makeatother
\newsiamremark{remark}{Remark}

\begin{document}

\maketitle

\begin{abstract}
We study feedback stabilization for the discrete-time system
$x_{t+1}=f(x_t)+u_t+w_{t+1}$ in $\R^d$ with unknown $f$ and arbitrary bounded disturbances.  For scalar plants, the sharp feedback capability threshold under generalized Lipschitz uncertainty is $\frac32+\sqrt2$.  We treat fully coupled vector-valued systems, where scalar order and interval recursion are unavailable and coupling precludes a coordinatewise reduction.  We introduce a response-history escape game in which the adversary seeks a finite envelope and an unbounded state radius.  Borel determinacy ensures that exactly one player has a winning strategy at each slope.  We prove that the same player wins from every finite response history, and slope monotonicity gives an independently defined game value $\Gamma_d$.  We prove that $\Gamma_d$ is finite and is the strict feedback capability threshold for the plant problem.  If $L<\Gamma_d$, one causal feedback law stabilizes every plant in the uncertainty class against every bounded disturbance sequence.  If $L>\Gamma_d$, for every causal feedback law there exist a plant in the same class and a bounded disturbance sequence such that the closed-loop state sequence is unbounded.  The proof uses one controller for all subcritical slopes and a realization in a Hilbert space based on the Kirszbraun--Valentine extension theorem.  An explicit nearest-neighbor law gives a lower bound above one in every finite dimension, including $\Gamma_2\ge2/\sqrt3$.  Dimension monotonicity gives $\Gamma_d\le\Gamma_1$, and comparison with the scalar theory yields $\Gamma_1=\frac32+\sqrt2$.
\end{abstract}

\begin{keywords}
robust stabilization, nonparametric uncertainty, dynamic games, Borel determinacy, feedback capability
\end{keywords}

\begin{MSCcodes}
93D09, 93D15, 93C10, 91A25
\end{MSCcodes}

\section{Introduction}
Feedback stabilization under nonparametric uncertainty is a basic question about
the capability of feedback.  When the plant is not specified by a parametric
model, one asks how large the uncertainty can be while still allowing a causal
controller to stabilize every plant in the class.  This paper studies the
discrete-time system
\begin{equation}\label{eq:plant}
        x_{t+1}=f(x_t)+u_t+w_{t+1},
        \qquad x_t,u_t,w_t\in\R^d,
\end{equation}
where $f\colon\R^d\to\R^d$ is unknown and the disturbance sequence is arbitrary but
bounded.  Stabilization means boundedness of the closed-loop state sequence.
The objective is to determine the range of nonparametric uncertainty that can be
handled by a single causal feedback law for every bounded disturbance sequence.

For scalar plants, this feedback capability question was resolved by Xie and
Guo \cite{XieGuo2000}.  In the scalar setting,
nonparametric uncertainty is measured by the generalized Lipschitz seminorm,
which records the smallest slope after allowing a finite additive envelope.
The sharp threshold for this slope is $\frac32+\sqrt2$.  Below this value
stabilization is possible, while above it every feedback law has a
counterexample.  Subsequent work developed related stabilization and limitation
results within scalar or structured uncertainty classes,
including critical stability and limitation results for discrete-time adaptive
nonlinear control \cite{Guo1997,XieGuo1999,Ma2008IJOC}, robust variants
\cite{LiXie2006}, polynomial criteria for adaptive stabilizability
\cite{LiXieGuo2006,LiuLi2023}, conditions based on the distribution of regions
with moderate growth \cite{LiuLi2019}, and feedback capability for nonlinearly
parameterized or semiparametric uncertainty classes
\cite{LiGuo2011,HuangGuo2012}.  Related work connects feedback with complexity
\cite{Zames1976,Zames1998} and studies control-oriented identification under
worst-case criteria
\cite{HelmickiJacobsonNett1991,MakilaPartingtonGustafsson1995}.
The search for critical conditions for general uncertain nonlinear control
systems of high dimension or order was explicitly identified as an open problem
in \cite{GuoICM2002}.  Broader accounts of feedback capability and uncertainty
are given in \cite{Guo2002,Guo2020}.

The scalar proof does not extend directly to fully coupled vector-valued
systems.  In the scalar case, it uses the order on
$\R$.  The visited states determine an interval, and the analysis follows the
evolution of interval quantities.  In the vector-valued case, the visited states
are finite configurations in Euclidean space.  Moreover, the uncertainty constraint
links the values generated at different visited states through Euclidean
distances.  A coordinatewise treatment would miss these links across
components, so the scalar interval argument does not transfer.

Our construction removes the known control input from each state transition
and records the remaining plant response together with the visited state.  The
generalized Lipschitz constraint can then be expressed through pairwise
differences between the recorded responses and the corresponding states.  This
description leads to a response-history escape game whose adversarial objective
requires a finite envelope and an unbounded state radius.

We use the escape game to define a value $\Gamma_d$ for each dimension.  For
each slope and each initial history, the escape objective is Borel.  Martin's
Borel determinacy theorem \cite{Martin1975} therefore ensures that one player
has a winning strategy at every slope.  We prove that the same player wins from every finite response history,
and monotonicity in the slope then defines $\Gamma_d$.  The main theorem
establishes the strict separation between subcritical stabilization and
supercritical impossibility for
\eqref{eq:plant} and identifies the separating value with $\Gamma_d$.
Quantitative bounds show that $\Gamma_d$ is finite and strictly greater than
one.  The proof combines one controller for all slopes below $\Gamma_d$ with a
realization argument
that represents response plays with finite envelope as trajectories of
\eqref{eq:plant}.  The realization step lifts the visited states into a Hilbert
space and applies the Kirszbraun--Valentine extension theorem
\cite{Kirszbraun1934,Valentine1945}.

The contributions are as follows.
\begin{enumerate}[label=(\roman*)]
\item We formulate a response-history escape game in terms of states,
responses, and the pairwise constraints defining a finite envelope.  The
adversary wins when the responses have a finite envelope and the state radius
is unbounded.  We prove that the same player wins from every finite response
history.  Borel determinacy and monotonicity in the slope then define a game
value $\Gamma_d$ for each dimension.
\item We prove that the original fully coupled vector-valued plant problem
admits a strict feedback capability threshold and identify it with $\Gamma_d$.
A single causal feedback law stabilizes the entire uncertainty class when
$L<\Gamma_d$.  When $L>\Gamma_d$, for every causal feedback law there exist a
plant and a bounded disturbance sequence whose closed-loop state sequence is
unbounded.  The proof uses one controller for all subcritical slopes in the
escape game and a realization lemma in a Hilbert space.
\item We obtain an explicit lower bound in every finite
dimension.  An explicit nearest-neighbor feedback law, together with a
scale-invariant packing argument, gives
\[
        \Gamma_d\ge
        \exp\left(\int_0^{1/2}\frac{t^{d-1}}{1-t^2}\,dt\right)>1.
\]
For $d=2$, the bound is $\Gamma_2\ge2/\sqrt3$.
\end{enumerate}

Finally, we relate the game value to the scalar theory.  In dimension
one, comparison with the Xie--Guo theorem gives
$\Gamma_1=\frac32+\sqrt2$, and an embedding argument gives
$\Gamma_{d+1}\le\Gamma_d$.  Thus the scalar value is an upper bound in every
dimension.

The rest of the paper is organized as follows.  Section~\ref{sec:model}
introduces the plant model and uncertainty class, formulates the feedback
capability problem, and fixes the notation for finite response histories.
Section~\ref{sec:game} constructs the escape game, defines the game value, and
states the main results.  Sections~\ref{sec:proof-threshold}
and~\ref{sec:proof-gamma-basic} prove the strict threshold theorem, the
quantitative lower bound, and dimension monotonicity.  The
appendices contain the subcritical
controller for the escape game, the realization lemma in a Hilbert space, and
the Borel determinacy verification.  They also prove that the same player wins
for every finite response history.

\section{Problem formulation and response histories}\label{sec:model}
Fix a dimension $d\ge1$, and let $\|\cdot\|$ denote the Euclidean norm.

\subsection{Plant model and stabilization}
Throughout the paper we study the discrete-time plant \eqref{eq:plant}, where
the map $f\colon\R^d\to\R^d$ is unknown and the disturbance sequence is bounded:
\[
        \sup_{t\ge1}\|w_t\|<\infty.
\]
A causal feedback law is a sequence $\Sigma=(\Sigma_t)_{t\ge0}$ of maps
\[
        \Sigma_t\colon(\R^d)^{t+1}\times(\R^d)^t\to\R^d,
        \qquad
        u_t=\Sigma_t(x_0,\ldots,x_t;u_0,\ldots,u_{t-1}).
\]
Following the scalar feedback capability formulation in \cite{XieGuo2000},
we work with arbitrary causal maps of observed histories.  The game strategies
introduced below use the same convention.
Given an initial state $x_0$, a plant $f$, a bounded disturbance sequence
$(w_t)_{t\ge1}$, and a causal feedback law, the plant equation determines the
closed-loop state sequence recursively.  Stabilization means boundedness of
this sequence:
\[
        \sup_{t\ge0}\|x_t\|<\infty.
\]

\subsection{Generalized Lipschitz uncertainty and feedback capability}

\begin{definition}
For $f\colon\R^d\to\R^d$, define its generalized Lipschitz seminorm by
\[
        \|f\|_{\GL,d}
        \triangleq
        \lim_{\alpha\to\infty}
        \sup_{x,y\in\R^d}
        \frac{\|f(x)-f(y)\|}{\|x-y\|+\alpha}.
\]
The limit exists in $[0,\infty]$ because the supremum is nonincreasing as a function of $\alpha$.  This quantity is an extended seminorm.  Bounded functions have value zero.
\end{definition}

For $d=1$, this formula reduces to the scalar generalized Lipschitz seminorm in
\cite{XieGuo2000}.  In higher dimensions we use the same normalization with the
Euclidean norm on $\R^d$.

The next lemma characterizes the seminorm by a Lipschitz bound with a finite additive envelope.

\begin{lemma}\label{lem:glnorm-envelope}
For every $f\colon\R^d\to\R^d$,
\[
        \|f\|_{\GL,d}
        =
        \inf\Bigl\{\ell\ge0\mid
        \exists A<\infty\ \text{such that }
        \|f(x)-f(y)\|\le \ell\|x-y\|+A,\ 
        \forall x,y\in\R^d
        \Bigr\}.
\]
In particular, for every $\ell>\|f\|_{\GL,d}$, there exists $A_f<\infty$ such that
\[
        \|f(x)-f(y)\|\le \ell \|x-y\|+A_f,
        \qquad x,y\in\R^d.
\]
\end{lemma}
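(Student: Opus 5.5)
Write $S(\alpha)=\sup_{x,y}\|f(x)-f(y)\|/(\|x-y\|+\alpha)$, so that $\|f\|_{\GL,d}=\lim_{\alpha\to\infty}S(\alpha)$. Let $I$ denote the infimum on the right-hand side of the statement, with $I=\infty$ if the set is empty. The plan is to prove the two inequalities $I\le\|f\|_{\GL,d}$ and $\|f\|_{\GL,d}\le I$ separately. The ``in particular'' clause will then follow from the first inequality, which produces envelopes at every slope above the seminorm.

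For $\|f\|_{\GL,d}\le I$, suppose $\ell$ belongs to the set, with envelope $A<\infty$. For every $\alpha>0$ and all $x,y$,
\[
\frac{\|f(x)-f(y)\|}{\|x-y\|+\alpha}\le\frac{\ell\|x-y\|+A}{\|x-y\|+\alpha}.
\]
Since $\ell\|x-y\|+A\le\ell(\|x-y\|+\alpha)+A$, the right-hand side is at most $\ell+A/\alpha$. Hence $S(\alpha)\le\ell+A/\alpha$, and letting $\alpha\to\infty$ gives $\|f\|_{\GL,d}\le\ell$. Taking the infimum over admissible $\ell$ yields $\|f\|_{\GL,d}\le I$. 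This also covers the case $I=\infty$ trivially.

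For $I\le\|f\|_{\GL,d}$, we may assume $\|f\|_{\GL,d}<\infty$; fix any $\ell>\|f\|_{\GL,d}$. The supremum $S(\alpha)$ is nonincreasing in $\alpha$ and converges to $\|f\|_{\GL,d}$, so there is a finite $\alpha_0$ with $S(\alpha_0)\le\ell$. If the limit is finite, some $S(\alpha)$ must be finite, so such an $\alpha_0$ exists. Then for all $x,y$,
\[
\|f(x)-f(y)\|\le\ell(\|x-y\|+\alpha_0)=\ell\|x-y\|+\ell\alpha_0.
\]
Thus $\ell$ is admissible with $A_f=\ell\alpha_0<\infty$. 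This gives $I\le\ell$ for every $\ell>\|f\|_{\GL,d}$, hence $I\le\|f\|_{\GL,d}$, and simultaneously proves the ``in particular'' statement.

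There is no real obstacle here. The only points needing care are the extended-valued conventions: the case $S(\alpha)=\infty$ for small $\alpha$, and the case where the limit is $\infty$. Both are handled by the remarks above.
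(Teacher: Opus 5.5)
Your proof is correct and follows the same route as the paper: in one direction you choose $\alpha_0$ with $S(\alpha_0)\le\ell$ and take $A_f=\ell\alpha_0$, and in the other you bound the ratio by $\ell+A/\alpha$ (the paper uses the equivalent bound $\max\{\ell,A/\alpha\}$) and let $\alpha\to\infty$. The one step you leave implicit is $A\ge0$, which your estimate $A/(\|x-y\|+\alpha)\le A/\alpha$ needs and which follows by taking $x=y$, as the paper notes.
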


\begin{proof}
Set
\[
        \psi(\alpha)\triangleq
        \sup_{x,y\in\R^d}
        \frac{\|f(x)-f(y)\|}{\|x-y\|+\alpha}.
\]
Let $\rho$ denote the infimum on the right-hand side of the displayed formula in the statement.  If $\ell>\|f\|_{\GL,d}$, then $\psi(\alpha_0)\le \ell$ for some $\alpha_0>0$, and therefore
\[
        \|f(x)-f(y)\|
        \le
        \ell (\|x-y\|+\alpha_0)
        =
        \ell \|x-y\|+\ell\alpha_0.
\]
Thus $\rho\le \ell$ for every $\ell>\|f\|_{\GL,d}$.  If $\|f\|_{\GL,d}<\infty$, taking the infimum over all such $\ell$ gives $\rho\le\|f\|_{\GL,d}$.  If $\|f\|_{\GL,d}=\infty$, this inequality is automatic.  Conversely, suppose that
\[
        \|f(x)-f(y)\|\le \ell\|x-y\|+A
        \qquad x,y\in\R^d,
\]
for some $\ell\ge0$ and $A<\infty$.  Taking $x=y$ gives $A\ge0$, and hence, for every $\alpha>0$,
\[
        \psi(\alpha)
        \le
        \sup_{s\ge0}\frac{\ell s+A}{s+\alpha}
        =
        \max\left\{\ell,\frac{A}{\alpha}\right\}.
\]
Taking the limit as $\alpha\to\infty$ gives $\|f\|_{\GL,d}\le \ell$.  Taking the infimum over all admissible $\ell$ gives $\|f\|_{\GL,d}\le\rho$.  The final assertion is the first implication with $A_f=\ell\alpha_0$.
\end{proof}

For $L\ge0$, let
\[
        \mathcal F_d(L)\triangleq
        \{f\colon\R^d\to\R^d\mid\|f\|_{\GL,d}\le L\}.
\]
For $f\in\mathcal F_d(L)$, Lemma~\ref{lem:glnorm-envelope} implies that $f$
maps bounded sets to bounded sets.  Hence, under a bounded disturbance,
boundedness of the state sequence also implies boundedness of the control
sequence through $u_t=x_{t+1}-f(x_t)-w_{t+1}$.  In the scalar case, this is the
stability criterion used in \cite{XieGuo2000}.

We say that $\mathcal F_d(L)$ is stabilizable under bounded disturbances if
there exists a causal feedback law such that the corresponding
closed-loop state sequence is bounded for every initial state, every
$f\in\mathcal F_d(L)$, and every bounded disturbance sequence.

The feedback capability problem is to determine the values of $L$ in the
stabilizable and impossibility regimes.  In the latter
regime, for every causal feedback law and every prescribed initial state, there
exist $f\in\mathcal F_d(L)$ and a bounded disturbance sequence for which the
resulting closed-loop state sequence is unbounded.

\subsection{Response histories and finite envelopes}

Along a trajectory of \eqref{eq:plant}, define the response at time $t$ by
\begin{equation}\label{eq:response}
        r_t\triangleq x_{t+1}-u_t=f(x_t)+w_{t+1}.
\end{equation}
Since the applied control is known, this response is known once the successor
state $x_{t+1}$ has been observed.  We use the states and responses to define
the finite response histories used below.

\begin{definition}
A finite response history of length $n\ge0$ has the form
\[
        H=(x_0,\ldots,x_n;r_0,\ldots,r_{n-1}),
\]
where $x_0,\ldots,x_n\in\R^d$ are states and, for $0\le i<n$,
$r_i\in\R^d$ is the response associated with $x_i$.  When $n=0$ the response
list is empty.  Its state radius is
\[
        \Rad(H)\triangleq\max_{0\le i\le n}\|x_i-x_0\|.
\]
If a control value $u\in\R^d$ is applied at $x_n$ and the response is
$r\in\R^d$, the successor history is
\[
        H^+(u,r)\triangleq(x_0,\ldots,x_n,u+r;r_0,\ldots,r_{n-1},r).
\]
\end{definition}

\begin{definition}
For a finite response history $H$ and a slope $\ell\ge0$, define
\begin{equation}\label{eq:Env}
        \Env_\ell(H)\triangleq
        \begin{cases}
        0, & n=0,\\[1mm]
        \displaystyle
        \max_{0\le i,j\le n-1}
        \bigl(\|r_i-r_j\|-\ell\|x_i-x_j\|\bigr)_+, & n\ge 1
        \end{cases},
\end{equation}
where $q_+\triangleq\max\{q,0\}$ for $q\in\R$.
The last state $x_n$ has no recorded response in $H$, so the maximum in
\eqref{eq:Env} ranges over $0,\ldots,n-1$.
Thus $\Env_\ell(H)$ is the smallest $B\ge0$ such that
\[
        \|r_i-r_j\|\le \ell\|x_i-x_j\|+B,
        \qquad 0\le i,j\le n-1,
\]
with the convention that this condition is vacuous when $n=0$.
\end{definition}

\section{The response-history escape game and main results}\label{sec:game}
This section defines the response-history escape game and its value $\Gamma_d$,
then states the main results.

\begin{definition}\label{def:play}
Starting from an initial history $H^0$, a play is a sequence
\[
        \pi=(H^k,u_k,r_k,H^{k+1})_{k\ge 0},
        \qquad
        H^{k+1}=(H^k)^+(u_k,r_k).
\]
At stage $k$, the controller observes $H^k$ and chooses $u_k\in\R^d$.  The adversary then observes $(H^k,u_k)$ and chooses $r_k\in\R^d$.  A controller strategy is a causal map $\mu$ from finite response histories to $\R^d$, so $u_k=\mu(H^k)$.  An adversary strategy is a causal map $\sigma$ from pairs $(H,u)$ to $\R^d$, so $r_k=\sigma(H^k,u_k)$.  Thus a pair of strategies and an initial history determine a unique play.
\end{definition}

Let $\mathscr P(H^0)$ denote the set of all plays starting from $H^0$.  For a slope $\ell\ge0$ and a play $\pi$, define
\[
        \Env_\ell^\infty(\pi)\triangleq\sup_{k\ge 0}\Env_\ell(H^k).
\]

\begin{definition}\label{def:escape-objective}
Fix $\ell\ge0$.  The adversary's escape objective is
\[
        \Esc_\ell\triangleq
        \Bigl\{\pi\mid
        \Env_\ell^\infty(\pi)<\infty
        \text{ and }
        \sup_k \Rad(H^k)=\infty
        \Bigr\}.
\]
When the initial history $H^0$ is fixed, write
$\Esc_{\ell,H^0}\triangleq\Esc_\ell\cap \mathscr P(H^0)$.
\end{definition}

\begin{definition}\label{def:winning}
Fix an initial history $H^0$ and a target set $\Omega\subseteq\mathscr P(H^0)$.
\begin{enumerate}[label=(\roman*)]
\item The adversary wins from $H^0$ for the target $\Omega$ if there exists an adversary strategy $\sigma$ such that, for every controller strategy $\mu$, the resulting play belongs to $\Omega$.
\item The controller wins from $H^0$ against $\Omega$ if there exists a controller strategy $\mu$ such that, for every adversary strategy $\sigma$, the resulting play does not belong to $\Omega$.
\end{enumerate}
\end{definition}

\begin{proposition}\label{prop:escape-determinacy}
For every finite response history $H^0$ and every slope $\ell\ge0$, the escape
game from $H^0$ at slope $\ell$ is determined.  Hence exactly one player has a
winning strategy for $\Esc_{\ell,H^0}$.
\end{proposition}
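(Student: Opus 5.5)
The plan is to recast the escape game from $H^0$ as a Gale--Stewart game on a Polish space of moves and then invoke Martin's Borel determinacy theorem \cite{Martin1975}. Its hypotheses are a perfect-information game on $X^{\mathbb N}$, with $X$ a set carrying the discrete topology, and a Borel payoff set. I will take the move set $X=\R^d$ with the \emph{discrete} topology. Martin's theorem holds for arbitrary move sets, so countability is not needed. A play from $H^0$ then corresponds bijectively to the sequence $(u_0,r_0,u_1,r_1,\ldots)\in X^{\mathbb N}$, because each $H^k$ is determined recursively by $H^0$ and the earlier moves via $H^{k+1}=(H^k)^+(u_k,r_k)$. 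The controller moves at even positions and the adversary at odd positions.

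Next I will check that strategies in the sense of Definition~\ref{def:play} coincide with Gale--Stewart strategies. A controller strategy is a map from finite response histories extending $H^0$ to $\R^d$. Such histories are in bijection with finite move sequences of even length, so controller strategies are exactly the maps on even-length positions. The same argument applies to adversary strategies, using pairs $(H,u)$ and odd-length positions. Because the paper imposes no measurability on strategies, no issue arises there. Under this identification, the winning notions of Definition~\ref{def:winning} are exactly the Gale--Stewart ones. The conclusion ``exactly one player'' then follows. Determinacy gives at least one winner. Both cannot win, since pitting the two winning strategies against each other produces a play that would have to lie both in $\Esc_{\ell,H^0}$ and outside it.

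The main step is to show that $\Esc_{\ell,H^0}$ is Borel in $X^{\mathbb N}$ with the product of discrete topologies. The key observation is that for each $k$, both $\Env_\ell(H^k)$ and $\Rad(H^k)$ depend only on the first $2k$ moves. Hence each set $\{\Env_\ell(H^k)\le B\}$ and $\{\Rad(H^k)>N\}$ is a union of basic cylinders, and so is clopen. I will then write
\[
\Esc_{\ell,H^0}=\Bigl(\bigcup_{B\in\mathbb N}\bigcap_{k}\{\Env_\ell(H^k)\le B\}\Bigr)\cap\Bigl(\bigcap_{N\in\mathbb N}\bigcup_k\{\Rad(H^k)>N\}\Bigr),
\]
which is an $F_\sigma$ set intersected with a $G_\delta$ set, and is therefore Borel. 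Replacing the supremum by countable unions and intersections is legitimate because $\Env_\ell(H^k)$ is nondecreasing in $k$ and the radius condition is a plain supremum. In fact monotonicity is not even needed, since $\sup_k a_k\le B$ iff $a_k\le B$ for all $k$.

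The only subtle point I expect is the topological one. With the Euclidean topology, $\R^{d\,\mathbb N}$ is Polish and the sets above remain Borel because $\Env_\ell$ and $\Rad$ are continuous in finitely many coordinates. Martin's theorem, however, is stated for discrete move spaces, and the discrete topology makes every function of finitely many moves continuous, so I will work with the discrete topology throughout. Martin's theorem in the form for trees on arbitrary sets, with the Borel $\sigma$-algebra generated by cylinders, applies directly. No further obstacle is expected beyond this bookkeeping; the step that needs the most care is confirming that the paper's strategy and history conventions match the tree positions exactly.
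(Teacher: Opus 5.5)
Your proposal is correct and is essentially the paper's proof: you code plays as move sequences $(u_0,r_0,u_1,r_1,\ldots)$, identify controller and adversary strategies with Gale--Stewart strategies on even- and odd-length positions, and write the coded escape set as an $F_\sigma$ set intersected with a $G_\delta$ set built from conditions on finitely many moves; you then apply Martin's theorem in its tree form for an arbitrary discrete move set, and get exclusivity by playing the two winning strategies against each other. The only differences are cosmetic: you argue clopenness directly in the discrete product topology, whereas the paper uses Euclidean continuity and then the inclusion $\mathcal B(\R^d)^{\otimes\omega}\subseteq\mathcal B\bigl((\R^d_{\mathrm{disc}})^\omega\bigr)$; also, your opening phrase ``Polish space of moves'' is a slip, since $\R^d$ with the discrete topology is not Polish, although the version of Martin's theorem you actually invoke needs no countability.
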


The proof is given in Appendix~\ref{app:escape-game-application}.

At each fixed slope, determinacy classifies every finite response history
according to which player has a winning strategy.  Let $\mathcal U_\ell$ be
the set of finite response histories from which the adversary wins:
\[
        \mathcal U_\ell\triangleq
        \{H\mid \text{the adversary wins from }H\text{ for }\Esc_{\ell,H}\}.
\]
If $\ell_1\le\ell_2$, then, for every finite response history $H$,
\[
        \Env_{\ell_2}(H)\le \Env_{\ell_1}(H),
        \qquad
        \Esc_{\ell_1,H}\subseteq \Esc_{\ell_2,H}.
\]
An adversary winning at slope $\ell_1$ also wins at slope $\ell_2$.  Hence
$\mathcal U_{\ell_1}\subseteq\mathcal U_{\ell_2}$, and the set of slopes at
which the controller wins from a fixed initial history is downward closed.

\begin{proposition}\label{prop:finite-history-invariance}
For every $\ell\ge0$, either $\mathcal U_\ell=\varnothing$ or
$\mathcal U_\ell$ is the set of all finite response histories.
\end{proposition}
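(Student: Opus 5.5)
The plan is to show that membership in $\Esc_\ell$ does not depend on a finite prefix, and that the game is invariant under translating states. An adversary who wins from one history $H$ can then transplant its winning strategy to any other history $H'$. Since $\mathcal U_\ell$ is either empty or not, it suffices to prove: if $H\in\mathcal U_\ell$, then $H'\in\mathcal U_\ell$ for every finite response history $H'$.

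\textbf{Step 1 (prefix lemma).} Consider a play whose full history consists of a finite prefix $(x'_0,\dots,x'_{m-1};r'_0,\dots,r'_{m-1})$ followed by a tail with states $y_0,y_1,\dots$ and responses $s_0,s_1,\dots$, where $y_0=x'_m$ carries the response $s_0$. I would show the following.
\begin{itemize}
\item[(a)] The state radius of the whole play is unbounded if and only if $\sup_k\|y_k-y_0\|=\infty$. This follows from the triangle inequality, since the base point changes by a fixed vector.
\item[(b)] The envelope of the whole play is finite if and only if $\sup_{j,k}(\|s_j-s_k\|-\ell\|y_j-y_k\|)_+=:B<\infty$.
\end{itemize}
One direction of (b) is trivial, because tail pairs are a subset of all pairs. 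For the converse, fix a prefix index $i$ and use $k=0$ as a pivot. For every tail index $j$,
\[
\|r'_i-s_j\|\le\|r'_i-s_0\|+\ell\|y_0-y_j\|+B
\le \|r'_i-s_0\|+\ell\|y_0-x'_i\|+\ell\|x'_i-y_j\|+B .
\]
Hence the mixed pairs contribute at most $\max_i(\|r'_i-s_0\|+\ell\|y_0-x'_i\|)+B$, which is finite. Prefix–prefix pairs are finitely many, so they contribute a finite amount as well.

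\textbf{Step 2 (translation).} Let $H=(x_0,\dots,x_n;r_0,\dots,r_{n-1})$ be a history from which the adversary wins with strategy $\sigma$, and let $H'$ have last state $x'_m$. Set $a=x_n-x'_m$. The adversary's strategy $\sigma'$ from $H'$ simulates a shadow play from $H$:
\begin{itemize}
\item whenever the controller plays $u$ in the real game, feed $u+a$ to the shadow game at the current shadow history;
\item obtain $r=\sigma(\cdot,u+a)$ and answer $r$ in the real game.
\end{itemize}
By induction, each real state $x'_m+\dots$ produced after $H'$ is the shadow state minus $a$, because $(u+a)+r=(u+r)+a$. The responses coincide. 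Consequently the real tail, starting at $x'_m$, is an exact translate of the shadow tail, starting at $x_n$. Any controller strategy $\mu'$ from $H'$ induces, through this correspondence, a causal controller strategy $\mu$ from $H$. The corresponding shadow play is $(\mu,\sigma)$, which lies in $\Esc_{\ell,H}$.

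\textbf{Step 3 (conclusion).} By Step 1 applied to the shadow play, its tail after $x_n$ has finite tail envelope and unbounded tail radius. Both quantities are invariant under the common translation of states, and responses are unchanged. So the real tail after $x'_m$ has the same properties, and Step 1 applied to the real play puts it in $\Esc_{\ell,H'}$. Thus $H'\in\mathcal U_\ell$.

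The main obstacle is the cross-term estimate in Step 1(b). The mixed pairs between prefix and tail must be controlled using only the tail envelope, and this works precisely because the response $s_0$ at the junction state serves as a pivot. I would also check the edge case $n=0$ or $m=0$ (empty response lists). It is harmless, since an unbounded radius forces infinitely many tail responses.
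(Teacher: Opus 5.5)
Your proposal is correct and follows essentially the same route as the paper's proof. The paper likewise transplants the winning adversary strategy by translating the appended states by the offset between the last states (shifting controls by the same offset and keeping responses unchanged), and it controls the mixed prefix--tail response pairs with the same pivot estimate through the first appended response; the only difference is that you isolate the prefix-independence of $\Esc_\ell$ as a separate lemma, whereas the paper carries out the bound inline.
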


The proof is given in Appendix~\ref{app:finite-initial-histories}.
It translates the future play between two initial histories.  The finite
prefixes contribute only a finite additive term to the envelope.

By slope monotonicity, the slopes for which $\mathcal U_\ell$ is empty form a
downward closed set.  Define
\begin{equation}\label{eq:Gamma}
        \Gamma_d\triangleq
        \sup\bigl(\{0\}\cup\{\ell\ge0\mid\mathcal U_\ell=\varnothing\}\bigr).
\end{equation}
The supremum in \eqref{eq:Gamma} is allowed to be $+\infty$.

\begin{remark}\label{rem:threshold-interpretation}
Propositions~\ref{prop:escape-determinacy} and
\ref{prop:finite-history-invariance} show that, at each fixed slope, the same
player wins from every finite response history.  Thus the game value does not
depend on the initial response history.
\end{remark}

The next theorem establishes the strict separation between subcritical
stabilization and supercritical impossibility and identifies the separating
value with $\Gamma_d$.

\begin{theorem}\label{thm:nf-critical}
For the fully coupled vector-valued uncertain system \eqref{eq:plant}, the
following hold.
\begin{enumerate}[label=(\alph*)]
\item \emph{Subcritical stabilization.} There exists a single causal feedback
law $\Sigma^*$ such that, for every $0\le L<\Gamma_d$, every initial state
$x_0\in\R^d$, every plant $f\in\mathcal F_d(L)$, and
every bounded disturbance sequence $(w_t)_{t\ge1}$, the closed-loop state
sequence is bounded.
\item \emph{Supercritical impossibility.} If $L>\Gamma_d$, then for every
causal feedback law $\Sigma$ and every initial state $x_0\in\R^d$, there exist
$f\colon\R^d\to\R^d$ with $\|f\|_{\GL,d}<L$ and a bounded disturbance sequence
$(w_t)_{t\ge1}$ such that the closed-loop state sequence from $x_0$ under
$\Sigma$ is unbounded.
\end{enumerate}
\end{theorem}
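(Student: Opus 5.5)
The proof splits into the two parts of the statement. Each part transfers a game-theoretic winning strategy to the plant problem.

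For part (a), I first build a single controller strategy $\mu^*$ that wins the escape game simultaneously for all slopes $\ell<\Gamma_d$. For each rational $\ell_m\uparrow\Gamma_d$ we have $\mathcal U_{\ell_m}=\varnothing$. By Proposition~\ref{prop:escape-determinacy} the controller therefore wins from every finite history at slope $\ell_m$; fix a winning strategy $\mu_m$. The combined strategy works in phases. In phase $m$ it plays $\mu_m$ from the current history, treated as a fresh initial history; this is allowed by Proposition~\ref{prop:finite-history-invariance}. It advances to phase $m+1$ exactly when $\Env_{\ell_m}$ of the play restricted to the phase exceeds a threshold $m$, or when the phase radius exceeds a schedule $R_m$. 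The hard point is to make the switching rule such that, against any adversary play with $\Env_\ell^\infty<\infty$ for some $\ell<\Gamma_d$, only finitely many switches happen.

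\emph{Main obstacle.} If phases keep being abandoned, I must show that the envelope at slope $\ell<\ell_m$ is eventually violated. This fails unless switches are triggered only by envelope growth, not by radius. So I would trigger switches only on $\Env_{\ell_m}$-excess. Within a final phase, $\mu_m$ wins, so a finite envelope forces a bounded radius.

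To pass to the plant, set $\Sigma^*_t$ equal to $\mu^*$ applied to the history $(x_0,\dots,x_t;r_0,\dots,r_{t-1})$, where $r_i=x_{i+1}-u_i$ is computable causally. Fix $f\in\mathcal F_d(L)$ and $\ell\in(L,\Gamma_d)$. Lemma~\ref{lem:glnorm-envelope} together with $r_i=f(x_i)+w_{i+1}$ gives
\[
\|r_i-r_j\|\le \ell\|x_i-x_j\|+A_f+2\sup_t\|w_t\|,
\]
so the closed-loop play has a finite $\ell$-envelope. Since $\mu^*$ wins at slope $\ell$, the radius is bounded.

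For part (b), choose $\ell\in(\Gamma_d,L)$. Then $\mathcal U_\ell\neq\varnothing$, so by Proposition~\ref{prop:finite-history-invariance} the adversary wins from the history $(x_0)$ with some strategy $\sigma$. Given $\Sigma$, view it as a controller strategy on response histories; this is possible because $u_t$ and $x_{t+1}=u_t+r_t$ determine the observed data. Play it against $\sigma$ to obtain a play with envelope $B<\infty$ and unbounded radius. It then remains to realize this play as a plant trajectory: we need $f$ with $\|f\|_{\GL,d}\le \ell<L$ and bounded $w$ such that $f(x_i)+w_{i+1}=r_i$.

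\emph{Realization.} Repeated states are harmless, because the envelope bounds the spread of the responses at equal points by $B$. Pick one representative response per distinct state, so that $w$ absorbs an error of size at most $B$. The data $x\mapsto r$ is not Lipschitz, because of the additive $B$. To fix this, lift the states into $\R^d\times\ell^2$ by attaching to each distinct point a component $c\,e_i$ with orthonormal $e_i$. For suitable $c\sim B/\ell$, the lifted distances satisfy $\ell\|\hat x_i-\hat x_j\|\ge\|r_i-r_j\|$ up to a bounded correction absorbed by $w$. Kirszbraun--Valentine then extends the map to an $\ell$-Lipschitz map $F$ from the Hilbert space to $\R^d$, and I define $f(x)=F(x,\phi(x))$ for a chosen bounded lift $\phi$. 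Since $\|(x,\phi(x))-(y,\phi(y))\|\le\|x-y\|+2c$, we get $\|f\|_{\GL,d}\le\ell$.

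The delicate point in part (b) is arranging the exact constants so that the lifted data are genuinely $\ell$-Lipschitz. I would take $c=B/(\sqrt2\,\ell)$, using $\|\hat x_i-\hat x_j\|\ge\max\{\|x_i-x_j\|,\sqrt2c\}$. If this does not suffice, I would instead aim for a slightly larger slope $\ell'\in(\ell,L)$, using the inequality $\ell\|x\|+B\le \ell'\sqrt{\|x\|^2+2c^2}$ with $c$ large.
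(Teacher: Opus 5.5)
Your proposal is correct and follows the paper's route: part (a) is the staged controller of Lemma~\ref{lem:no-slope-controller}, which restarts only when the stage envelope at slope $\ell_m$ exceeds the budget $m$, and part (b) combines a winning adversary strategy with the Hilbert-space lift and the Kirszbraun--Valentine realization of Lemma~\ref{lem:nf-realization}. Your first choice $c=B/(\sqrt2\,\ell)$ cannot work for any $c$: the $\max$ bound only yields slope $2\ell$, and $\ell\sqrt{s^2+2c^2}-\ell s\to0$ as $s\to\infty$. So the fallback to a slope $\ell'\in(\ell,L)$ is necessary, and it is exactly the paper's choice $\Gamma_d<\ell<K<L$ with $\rho\ge A/\sqrt{2(K^2-\ell^2)}$.
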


The proof is given in Section~\ref{sec:proof-threshold}.

\begin{remark}\label{rem:strict-regimes}
Theorem~\ref{thm:nf-critical} is a strict threshold statement and makes no
assertion at $L=\Gamma_d$.  Its subcritical conclusion concerns the causal
feedback class specified in Section~\ref{sec:model}.  Corollary~\ref{cor:explicit-nn-controller}
provides a concrete nearest-neighbor feedback law on an explicit subcritical
range.
\end{remark}

We next record a quantitative lower bound and the dimension monotonicity of the
game value.  For $0\le a<1$, set
\[
        J_d(a)\triangleq\int_0^a\frac{t^{d-1}}{1-t^2}\,dt,
        \qquad
        \kappa_d\triangleq\exp\bigl(J_d(1/2)\bigr).
\]

\begin{theorem}\label{thm:gamma-basic}
For every $d\ge1$,
\[
        \Gamma_d\ge \kappa_d>1+\frac{1}{d\,2^d}>1,
        \qquad
        \Gamma_{d+1}\le \Gamma_d.
\]
\end{theorem}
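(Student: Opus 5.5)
The statement has three parts: the lower bound $\Gamma_d\ge\kappa_d$, the numerical inequality $\kappa_d>1+1/(d2^d)$, and dimension monotonicity. I would treat them in that order.

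\textbf{Lower bound.} I would prove that the controller wins the escape game at every slope $\ell<\kappa_d$. By Proposition~\ref{prop:finite-history-invariance}, it suffices to win from a single history, which gives $\mathcal U_\ell=\varnothing$ and hence $\Gamma_d\ge\ell$. The strategy is the nearest-neighbor law: at $H^k$, find an index $j<k$ whose state $x_j$ is nearest to $x_k$, and set $u_k=x_j-r_j$. This is the point the plant would reach if the response at $x_k$ equaled the recorded one at $x_j$. Then
\[
x_{k+1}-x_j=r_k-r_j, \qquad \|x_{k+1}-x_j\|\le \ell\|x_k-x_j\|+B,
\]
where $B=\Env_\ell^\infty(\pi)<\infty$ on any play in $\Esc_\ell$.

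Now suppose the radius is unbounded. I would isolate the steps at which the state reaches a new scale, with distances $\delta_k=\|x_k-x_{j(k)}\|$ large compared to $B$. Each new point $x_{k+1}$ lies within roughly $\ell\delta_k$ of an old point, yet its own nearest-neighbor distance must grow for the radius to escape. The key quantity is the fraction of "fresh direction" that a point at distance $\delta$ from its nearest neighbour can occupy. I would use a scale-invariant packing count: points all at mutual ratio distance at least some $t$ from each other, placed on dyadic shells. Summing the logarithmic growth $\log(\ell)$ against the measure of admissible relative positions $t^{d-1}/(1-t^2)$, integrated up to $t=1/2$, should yield the contradiction exactly when $\log\ell<J_d(1/2)$. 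The factor $1/(1-t^2)$ suggests a geometric series $\sum t^{2m}$ coming from repeated reflection across the nearest neighbor.

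I expect this packing and potential argument to be the main obstacle. The approach I would try first is a potential function $\Phi_k=\sum_i \phi(\cdot)$ over visited points, weighted by scale and chosen so that $\Phi_k$ stays bounded by a volume or packing estimate while increasing by at least $\log$ of the expansion ratio at each escape step. Matching this to the precise constant $J_d(1/2)$ is the delicate part.

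\textbf{Numerical inequality.} Since $1/(1-t^2)\ge1$,
\[
J_d(1/2)\ge\int_0^{1/2}t^{d-1}\,dt=\frac{1}{d2^d},
\]
and the inequality is strict because $1/(1-t^2)>1$ on $(0,1/2]$. Then $e^x>1+x$ for $x>0$ gives $\kappa_d>1+1/(d2^d)>1$.

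\textbf{Monotonicity $\Gamma_{d+1}\le\Gamma_d$.} Take $\ell>\Gamma_d$, so the adversary wins in dimension $d$ from every history. I would embed $\R^d\hookrightarrow\R^{d+1}$ as $\R^d\times\{0\}$. Given a controller strategy $\mu$ in dimension $d+1$, the adversary projects each control onto $\R^d$ and feeds it to the $d$-dimensional winning strategy. It answers with a response $r$ whose last coordinate is chosen to cancel the control's last coordinate, so that $u+r\in\R^d\times\{0\}$. The resulting responses are the $d$-dimensional ones plus vertical components $-u^{(d+1)}$, which may be unbounded and would break the envelope. To avoid this, I would instead work with the induced projected play. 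Since projection is $1$-Lipschitz, it is cleaner to define the $d$-dimensional controller $\tilde\mu$ as the projection of $\mu$ applied to the embedded history. The $d$-dimensional winning $\sigma$ defeats $\tilde\mu$, and the lifted play has identical states. The responses, though, differ by the vertical cancellation terms.

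This issue needs care. The fix I would try is to let the states in $\R^{d+1}$ be $u+r$ with $r=(\sigma(\cdot),0)$, allowing vertical drift. Then pairwise state distances only increase, while response differences are unchanged, so $\Env_\ell$ does not increase. The radius must still diverge, and I would argue this by running $\sigma$ against the controller obtained by projecting states. Either the horizontal radius diverges, or one uses that the horizontal projection of the play is itself a legitimate $d$-dimensional play against $\tilde\mu$, where $\tilde\mu$ reads only projected histories, extended causally. Hence $\mathcal U^{(d+1)}_\ell\neq\varnothing$ for all $\ell>\Gamma_d$, which gives $\Gamma_{d+1}\le\Gamma_d$.
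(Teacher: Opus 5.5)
The genuine gap is in the lower bound $\Gamma_d\ge\kappa_d$, and it begins with your choice of controller. Aiming at the nearest neighbor, $u_k=x_j-r_j$, gives only $\|x_{k+1}-x_{j(k)}\|\le\ell\delta_k+B$. This bounds the new state relative to a moving point, so bounded $\delta_k$ does not bound the states, and your reduction from unbounded radius to large $\delta_k$ fails. In fact this controller loses at every slope. Take $d=1$, $x_0=0$, $u_0=0$, and the open-loop responses $r_k=1$ for even $k$ and $r_k=0$ for odd $k$. This is the plant $f\equiv0$ with disturbances in $\{0,1\}$, and $\Env_\ell(H^k)\le1$ for every $\ell\ge0$. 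With smallest-index tie-breaking the states are $0,1,-1,0,-1,-1,-2,-1,-2,-2,-3,\dots$. By induction $x_{4m+2}=-(m+1)$, while $\delta_k\le1$ for all $k$, so the play lies in $\Esc_\ell$ even for $\ell=0$. The paper instead aims at the fixed initial state, $u_t=x_0-r_{i_t}$, which gives the anchored bound $\|x_{t+1}-x_0\|\le\ell\delta_t+A$. This bound immediately settles the case of bounded $\delta_t$. Crucially, it also controls where the record points sit relative to $x_0$.

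Even with that controller, your sketch omits the packing argument, which is the real content of the bound. The factor $t^{d-1}/(1-t^2)$ does not come from a reflection series. The paper passes to the strict record times $n_k$ of $\delta_t$, with records $m_k$ and points $y_k=x_{n_k}-x_0$. Fix $q\in(\max\{1,\ell\},\kappa_d)$. For large $k$, the anchored bound gives $m_k\le\|y_k\|\le\ell m_{k-1}+A\le q m_{k-1}$. Since $\|y_j-y_i\|\ge m_j$ for $i<j$, the balls $D_k$ of radius $m_k/2$ about $y_k$ are pairwise disjoint. The paper measures these balls with the dilation-invariant measure $\nu_d(dz)=dz/\|z\|^d$. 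The Poisson-kernel identity yields $\nu_d(B(y,r))=\sigma_{d-1}J_d(r/\|y\|)$ for $r<\|y\|$. Set $\lambda_k=m_k/m_{k-1}\in(1,q]$. Because $J_d(\lambda/(2q))-\log\lambda$ is decreasing on $[1,q]$, one gets $\nu_d(D_k)\ge\sigma_{d-1}J_d(\lambda_k/(2q))\ge\sigma_{d-1}\bigl(\log\lambda_k+\log(\kappa_d/q)\bigr)$. The balls $D_{k_0},\dots,D_N$ lie in an annulus of $\nu_d$-measure $\sigma_{d-1}\bigl(\log(2q+1)+\log(m_N/m_{k_0})\bigr)$. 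Comparing the two estimates, the $\log\lambda_k$ terms telescope away. Hence $(N-k_0+1)\log(\kappa_d/q)$ stays bounded in $N$, which is a contradiction. Your sketch has neither the anchor at $x_0$, which fixes the relative radius of each ball, nor this measure.

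Your numerical inequality is exactly the paper's. Your final version of the monotonicity argument is also the paper's: the adversary $I\bar\sigma(PH,Pu)$, compared through the projected play. One point needs care there. Because $P$ is not injective on histories, define the $d$-dimensional controller only along the projected histories of the single play generated by the fixed $\mu$, and arbitrarily elsewhere. This is consistent because those histories have distinct lengths.
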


The proof is given in Section~\ref{sec:proof-gamma-basic}.

\begin{corollary}\label{cor:explicit-nn-controller}
For every $d\ge1$, there is an explicit nearest-neighbor causal feedback law
such that, for every $0\le L<\kappa_d$, every initial state $x_0\in\R^d$, every
plant $f\in\mathcal F_d(L)$, and every bounded disturbance sequence, the
closed-loop state sequence is bounded.  The same law applies to all
$L<\kappa_d$ and uses neither $L$, $f$, nor a bound on the disturbance.
\end{corollary}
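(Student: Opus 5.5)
The plan is to build an explicit nearest-neighbor controller, to show that it wins the escape game at every slope $\ell<\kappa_d$, and then to transfer that win to the plant. The law is as follows. At time $t$, with history $(x_0,\ldots,x_t;r_0,\ldots,r_{t-1})$, choose an index $j(t)\in\{0,\ldots,t-1\}$ that minimizes $\|x_t-x_j\|$ (break ties by the smallest index, and take $u_0=-x_0$ when $t=0$). Then set
\[
u_t=-r_{j(t)}.
\]
The law is causal and depends only on the observed history, since $r_j=x_{j+1}-u_j$. It uses neither $L$, $f$, nor any disturbance bound.

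Now fix $L<\kappa_d$, a plant $f\in\mathcal F_d(L)$ and a disturbance with $\|w_t\|\le W$. Choose $\ell$ with $L<\ell<\kappa_d$ and apply Lemma~\ref{lem:glnorm-envelope} to get $A_f$. Then every pair of responses satisfies $\|r_i-r_j\|\le\ell\|x_i-x_j\|+A_f+2W$. The closed loop therefore obeys
\[
\|x_{t+1}\|=\|r_t-r_{j(t)}\|\le \ell\,\delta_t+B,
\qquad
\delta_t\triangleq\min_{j<t}\|x_t-x_j\|,
\qquad
B\triangleq A_f+2W.
\]
So each new state is at most $\ell$ times the nearest-neighbor gap of the current state, plus the constant $B$.

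The core step is a packing argument showing that this recursion forces $\sup_t\|x_t\|<\infty$ whenever $\ell<\kappa_d$. Suppose the radii are unbounded and look at record times, where $\|x_{t+1}\|$ exceeds all previous norms by a large factor. The recursion gives $\delta_t\ge(\|x_{t+1}\|-B)/\ell$, so the current state is far from all earlier states. The visited points then form a configuration in which each point at scale $\rho$ is isolated by a fraction of $\rho$. This is what the scale-invariant packing argument will quantify. I would attach to each $x_t$ the ball of radius $\delta_t/2$ around it. These balls are pairwise disjoint, because the later point is at distance at least its own $\delta$ from the earlier one. I would then weight the balls by the scale-invariant measure $\|y\|^{-d}\,dy$. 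A ball of radius $a\|x\|$ centred at $x$ has weighted measure comparable to an integral of $t^{d-1}/(1-t^2)$. After integrating in polar form and symmetrizing, the natural quantity is $J_d(a)$, with $a\le 1/2$ coming from the half-gap.

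I expect the obstacle to be the following. I want the total logarithmic growth $\sum\log(\|x_{t+1}\|/\|x_t\|)$ along an escaping sequence to be bounded above by the weighted volume at the corresponding scales. Near-disjointness across scales makes that volume grow only like $\log R$ per unit of $\log R$ with constant $J_d(1/2)$ per unit ball budget. The condition for escape would then become $\log\ell\ge J_d(1/2)$, which contradicts $\ell<\kappa_d$. Making this accounting exact is the hard part; I would first try it with the sharp normalization rather than with crude constants. The additive constant $B$ is absorbed at large radii, because all inequalities are homogeneous up to $O(B)$.

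Once the radius is shown to be bounded, the closed-loop state sequence is bounded. The same law works for every $L<\kappa_d$, because $\ell$ was only used in the analysis and not in the law. The reasoning carries over verbatim to the escape game, where responses are adversarial but the envelope is finite, and this also yields $\Gamma_d\ge\kappa_d$.
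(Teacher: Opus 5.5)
Your law is the paper's nearest-neighbor cancellation law up to a translation, and your reduction to $\|x_{t+1}\|\le\ell\delta_t+B$ is the paper's recursion. The paper centers at $x_0$ instead, with $u_t=x_0-r_{i_t}$. Then the center of the scale-invariant measure is itself a visited state, and $\|x_n-x_0\|\ge\delta_n$ holds automatically. With your origin-centered law you only get $\|x_n\|\ge\delta_n-\|x_0\|$, which is harmless but must be carried along.

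The genuine gap is that the packing argument, which is the whole content of the corollary, is left as ``the hard part'', and the sketch you give fails as written. The balls $\{z\mid\|z-x_t\|<\delta_t/2\}$ are not pairwise disjoint in general. For $s<t$ you only know $\|x_t-x_s\|\ge\delta_t$, which gives disjointness only when $\delta_t\ge\delta_s$, and a later point may land close to an earlier, well-isolated one. The correct subsequence is the strict records $n_1<n_2<\cdots$ of the gap sequence $(\delta_t)$ itself. Along it, $m_k=\delta_{n_k}$ increases and $\|x_{n_j}-x_{n_i}\|\ge m_j>(m_i+m_j)/2$ for $i<j$. Records of the norm ``by a large factor'' are the wrong objects, and they need not even occur: $\delta_t\le\|x_t-x_0\|$ gives $\|x_{t+1}\|\le\ell\|x_t\|+O(1)$.

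The missing quantitative idea is to compare each record ball with its own scale increment. With the law centered at $x_0$, set $y_k=x_{n_k}-x_0$ and fix $q\in(\max\{1,\ell\},\kappa_d)$. Apply the recursion at time $n_k-1$, where $\delta_{n_k-1}\le m_{k-1}$ because that time precedes the $k$th record. This gives $m_k\le\|y_k\|\le\ell m_{k-1}+B\le qm_{k-1}$ for large $k$. Hence $\lambda_k=m_k/m_{k-1}\in(1,q]$, and the record ball has radius-to-center ratio at least $\lambda_k/(2q)$. The exact identity $\nu_d(\{z\mid\|z-y\|<r\})=\sigma_{d-1}J_d(r/\|y\|)$ then gives measure at least $\sigma_{d-1}J_d(\lambda_k/(2q))$. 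This identity is a Poisson-kernel computation, and a merely ``comparable'' bound would lose the sharp constant $\kappa_d$.

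Next, $\lambda\mapsto J_d(\lambda/(2q))-\log\lambda$ is decreasing on $[1,q]$. Its derivative has the sign of $\lambda^d(2q)^{-d}(1-\lambda^2/(4q^2))^{-1}-1\le 4/(3\cdot2^d)-1<0$. So every record contributes at least $\sigma_{d-1}(\log\lambda_k+\eta)$ with $\eta=\log(\kappa_d/q)>0$. Summing over $k_0\le k\le N$ gives at least $\sigma_{d-1}\bigl(\log(m_N/m_{k_0-1})+(N-k_0+1)\eta\bigr)$. The disjoint balls, however, lie in the annulus $m_{k_0}/2\le\|z\|\le(q+\frac12)m_N$, whose measure is $\sigma_{d-1}\log\bigl((2q+1)m_N/m_{k_0}\bigr)$. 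Letting $N\to\infty$ gives the contradiction.

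Your heuristic that escape forces $\log\ell\ge J_d(1/2)$ is only the endpoint $\lambda_k=q$ of this comparison. Without the monotonicity step, nothing rules out escape through records with intermediate ratios. Also, the quantity balanced against the measure is $\sum_k\log\lambda_k$ over gap records, not the norm increments $\sum_t\log(\|x_{t+1}\|/\|x_t\|)$.
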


The proof is given in Section~\ref{sec:proof-gamma-basic}.  Since $\kappa_d>1$,
this explicit law covers slopes strictly larger than one in every finite
dimension.

\begin{corollary}\label{cor:universal-interval}
For every $d\ge1$,
\begin{equation}\label{eq:universal-interval}
        \kappa_d\le \Gamma_d\le \Gamma_1=\frac32+\sqrt2.
\end{equation}
In particular,
\[
        \frac{2}{\sqrt3}
        \le \Gamma_2
        \le \frac32+\sqrt2.
\]
\end{corollary}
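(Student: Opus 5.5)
The corollary follows by chaining results already available; I will establish the three pieces of \eqref{eq:universal-interval} separately. The left inequality $\kappa_d\le\Gamma_d$ is exactly the first assertion of Theorem~\ref{thm:gamma-basic}. Iterating the dimension monotonicity $\Gamma_{k+1}\le\Gamma_k$ of Theorem~\ref{thm:gamma-basic} for $k=1,\dots,d-1$ gives $\Gamma_d\le\Gamma_1$. It remains to identify $\Gamma_1=\frac32+\sqrt2$ and to evaluate $\kappa_2$.

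For $\Gamma_1$, I compare Theorem~\ref{thm:nf-critical} at $d=1$ with the Xie--Guo theorem \cite{XieGuo2000}. For $d=1$, $\|\cdot\|_{\GL,1}$ coincides with the scalar generalized Lipschitz seminorm, so $\mathcal F_1(L)$ is the Xie--Guo class.

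Suppose $\Gamma_1<\frac32+\sqrt2$, and pick $L$ with $\Gamma_1<L<\frac32+\sqrt2$. Theorem~\ref{thm:nf-critical}(b) then gives, for every causal law, some $f$ with $\|f\|_{\GL,1}<L$ and a bounded disturbance producing an unbounded trajectory. This contradicts the Xie--Guo stabilizability of the class with slope $L<\frac32+\sqrt2$.

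Conversely, suppose $\Gamma_1>\frac32+\sqrt2$, and pick $L$ with $\frac32+\sqrt2<L<\Gamma_1$. Theorem~\ref{thm:nf-critical}(a) yields a law $\Sigma^*$ stabilizing all of $\mathcal F_1(L)$ under bounded disturbances. This contradicts the Xie--Guo impossibility result above the threshold.

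The main point needing care is that the Xie--Guo formulation matches the one here. It uses the same causal feedback class and the same boundedness notion; the introduction notes that boundedness of states implies boundedness of controls. Its impossibility statement is strict: for $L>\frac32+\sqrt2$, some plant in the class and some bounded noise defeat every law. Since the two comparisons use strict interior slopes $L$, the behavior at the threshold is never needed.

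Finally, $\kappa_2=\exp(J_2(1/2))$. Here
\[
        J_2(a)=\int_0^a\frac{t}{1-t^2}\,dt=-\tfrac12\log(1-a^2),
\]
so $J_2(1/2)=-\frac12\log\frac34$ and $\kappa_2=(3/4)^{-1/2}=2/\sqrt3$. Combining this with $\Gamma_2\le\Gamma_1=\frac32+\sqrt2$ gives the displayed bound for $\Gamma_2$.
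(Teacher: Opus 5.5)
Your proposal is correct and follows essentially the same route as the paper: $\kappa_d\le\Gamma_d$ and iterated monotonicity from Theorem~\ref{thm:gamma-basic}, the two-sided comparison with Xie--Guo at strictly interior slopes to get $\Gamma_1=\frac32+\sqrt2$, and the direct evaluation $\kappa_2=2/\sqrt3$. The only point the paper makes explicit that you merely assert is why the feedback classes match: the past controls of a fixed law can be recovered recursively from the state history, and this is what lets $\Sigma^*$ count as a Xie--Guo feedback law in your second comparison.
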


\begin{proof}
The scalar result \cite[Theorem~2.1]{XieGuo2000} treats the same uncertainty
class and stabilization problem under bounded disturbances.  The causal feedback
classes are equivalent because past controls generated by a fixed law can be
recovered recursively from the state history.  It gives the sharp threshold
$\ell_*=\frac32+\sqrt2$.  If $\Gamma_1<\ell_*$, choose
$\Gamma_1<L<\ell_*$.  Theorem~\ref{thm:nf-critical}(b) contradicts scalar
stabilization at slope $L$.  If $\ell_*<\Gamma_1$, choose
$\ell_*<L<\Gamma_1$.  Theorem~\ref{thm:nf-critical}(a) then contradicts scalar
impossibility at slope $L$.  Hence $\Gamma_1=\ell_*$.

Together with the lower bound and dimension monotonicity in
Theorem~\ref{thm:gamma-basic}, this proves \eqref{eq:universal-interval}.

For $d=2$, direct integration gives
\[
        J_2(1/2)
        =\int_0^{1/2}\frac{t}{1-t^2}\,dt
        =\frac12\log\frac43.
\]
Hence $\kappa_2=\exp(J_2(1/2))=2/\sqrt3$.
\end{proof}

\begin{remark}\label{rem:values-dgeq-two}
The scalar identity in Corollary~\ref{cor:universal-interval} shows that the
escape game recovers the Xie--Guo threshold.  For $d\ge2$, determining
$\Gamma_d$ in closed form remains a separate question.  The scalar upper
bound may be sharp in all dimensions, or the threshold may depend on the
dimension.
\end{remark}

\section{Proof of Theorem~\ref{thm:nf-critical}}\label{sec:proof-threshold}
The proof has two directions.  In the subcritical direction, the escape game
controller from Lemma~\ref{lem:no-slope-controller} is implemented as a plant
feedback law by using \eqref{eq:response} to reconstruct the responses.  The
generalized Lipschitz bound on the plant and boundedness of the disturbance imply that the
induced response play has finite envelope at a subcritical slope, and the
controller's winning property then gives bounded states.  In the supercritical
direction, an adversary winning strategy in the escape game produces an
escaping response play with finite envelope.  Lemma~\ref{lem:nf-realization},
proved through a realization in a Hilbert space and the Kirszbraun--Valentine
extension theorem, realizes that play as the trajectory of
an actual plant with generalized Lipschitz seminorm below the prescribed
supercritical slope and with bounded disturbances.

For part~(a), let $\mu^*$ be the escape game strategy from
Lemma~\ref{lem:no-slope-controller}.  At time $t$, reconstruct the observed
responses through \eqref{eq:response}, form
$H^t=(x_0,\ldots,x_t;r_0,\ldots,r_{t-1})$, and apply
$u_t=\mu^*(H^t)$.  Denote the resulting causal feedback law for
\eqref{eq:plant} by $\Sigma^*$.

Fix $0\le L<\Gamma_d$, an initial state $x_0\in\R^d$, a plant
$f\in\mathcal F_d(L)$, and a bounded disturbance sequence $(w_t)_{t\ge1}$.
Choose a finite slope
\[
        \|f\|_{\GL,d}\le L<\alpha<\Gamma_d.
\]
By Lemma~\ref{lem:glnorm-envelope}, there exists $A_f<\infty$ such that
\[
        \|f(x)-f(y)\|\le \alpha\|x-y\|+A_f,
        \qquad x,y\in\R^d.
\]
Write $W=\sup_{t\ge1}\|w_t\|<\infty$.  By \eqref{eq:response}, the closed-loop
state sequence induces a response play $\pi$ controlled by $\mu^*$, and for all
$i,j\ge0$,
\[
        \|r_i-r_j\|
        \le
        \|f(x_i)-f(x_j)\|+\|w_{i+1}\|+\|w_{j+1}\|
        \le
        \alpha\|x_i-x_j\|+A_f+2W.
\]
Consequently,
\[
        \Env_\alpha^\infty(\pi)\le A_f+2W<\infty.
\]
Lemma~\ref{lem:no-slope-controller} gives boundedness of the state sequence.
This proves part~(a).

For part~(b), the assertion is vacuous if $\Gamma_d=\infty$.  Otherwise fix $L>\Gamma_d$ and choose
\[
        \Gamma_d<\ell<K<L.
\]
Equation~\eqref{eq:Gamma} gives $\mathcal U_\ell\ne\varnothing$.  Hence
Proposition~\ref{prop:finite-history-invariance} implies that every finite
response history belongs to $\mathcal U_\ell$.

Fix a causal feedback law $\Sigma=(\Sigma_t)_{t\ge0}$ for the plant and an initial
state $x_0$.  We view $\Sigma$ as a controller on response histories by recovering
past controls from a response history.  For
$H=(z_0,\ldots,z_n;r_0,\ldots,r_{n-1})$, set
$u_i=z_{i+1}-r_i$ for $0\le i<n$ and define
\[
        \mu(H)=\Sigma_n(z_0,\ldots,z_n;u_0,\ldots,u_{n-1}).
\]
For $n=0$ this means $\mu((z_0))=\Sigma_0(z_0)$.  A response play compatible
with $\mu$ therefore realizes exactly the controls prescribed by $\Sigma$ along
the same state sequence.

Let $H^0=(x_0)$.  Since $H^0\in\mathcal U_\ell$, an adversary strategy produces
a play $\pi\in\Esc_{\ell,H^0}$ against $\mu$.  Thus $\pi$ is compatible with
$\mu$, has finite envelope at slope $\ell$, and has an unbounded state radius.

Apply Lemma~\ref{lem:nf-realization} to this play.  Since it has finite envelope
at slope $\ell$, the lemma realizes it using a $K$-Lipschitz map
$f\colon\R^d\to\R^d$ and a bounded disturbance sequence $(w_t)_{t\ge1}$.
Because $\mu$ was induced from $\Sigma$ by recovering past controls, the same
unbounded state sequence is realized by \eqref{eq:plant} under the original
feedback law $\Sigma$.  Moreover, $\|f\|_{\GL,d}\le K<L$, so
$f\in\mathcal F_d(L)$.  Thus the constructed plant and bounded disturbance
provide the required counterexample to $\Sigma$.
This proves part~(b), and hence Theorem~\ref{thm:nf-critical}.

\section{Proof of Theorem~\ref{thm:gamma-basic}}\label{sec:proof-gamma-basic}
The proof has two parts.  The lower bound uses a controller based on
nearest-neighbor cancellation and a scale-invariant packing argument for record balls.  The dimension
monotonicity follows by embedding a $d$-dimensional adversarial strategy into
dimension $d+1$ and projecting each controller in dimension $d+1$ back to
dimension $d$.

We first compute the scale-invariant measure of the balls not centered at the
origin that are used in the packing argument.

\begin{lemma}\label{lem:scale-invariant-volume}
Let $\mathbb S^{d-1}=\{\omega\in\R^d\mid\|\omega\|=1\}$, let $d\sigma$ denote
surface measure on $\mathbb S^{d-1}$ (counting measure when $d=1$), and let
$\sigma_{d-1}=\sigma(\mathbb S^{d-1})$.  For every Borel set
$E\subseteq\R^d$, define
\[
        \nu_d(E)\triangleq\int_{E\setminus\{0\}}\frac{dz}{\|z\|^d},
\]
where $dz$ denotes Lebesgue measure and the integral may be infinite.  If
$y\in\R^d$ and
$0<r<\|y\|$, then
\begin{equation}\label{eq:off-center-ball-volume}
        \nu_d\bigl(\{z\mid\|z-y\|<r\}\bigr)
        =
        \sigma_{d-1}J_d\left(\frac{r}{\|y\|}\right).
\end{equation}
Moreover, if $0<a<b$, then
\begin{equation}\label{eq:scale-invariant-annulus-volume}
        \nu_d\bigl(\{z\mid a\le\|z\|\le b\}\bigr)
        =\sigma_{d-1}\log\frac{b}{a}.
\end{equation}
\end{lemma}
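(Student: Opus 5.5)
The plan is to integrate in polar coordinates, taken about the origin for the annulus identity \eqref{eq:scale-invariant-annulus-volume} and about the center $y$ for the off-center ball identity \eqref{eq:off-center-ball-volume}.

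\textbf{Annulus.} I would write $z=s\omega$ with $s>0$ and $\omega\in\mathbb S^{d-1}$, so that $dz=s^{d-1}\,ds\,d\sigma(\omega)$ and $dz/\|z\|^d=ds\,d\sigma(\omega)/s$. The annulus is the product set $[a,b]\times\mathbb S^{d-1}$ in these coordinates. Hence its measure is $\sigma_{d-1}\int_a^b ds/s=\sigma_{d-1}\log(b/a)$. For $d=1$ the same computation is simply the sum over the two half-lines, and $\sigma_0=2$.

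\textbf{Off-center ball, reduction.} First note that $\nu_d$ is invariant under orthogonal maps and under dilations $z\mapsto\lambda z$, because $dz/\|z\|^d$ is. This reduces the claim to $y=e_1$ and radius $a=r/\|y\|\in(0,1)$. Since $a<1$, the origin lies outside the closed ball, so the integrand is bounded on it and no singularity arises.

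\textbf{Off-center ball, computation.} I would parametrize the ball by polar coordinates centered at $e_1$: $z=e_1+\rho\eta$ with $0\le\rho<a$ and $\eta\in\mathbb S^{d-1}$. Then $\|z\|^2=1+2\rho\eta_1+\rho^2$, and
\[
        \nu_d\bigl(\{z\mid\|z-e_1\|<a\}\bigr)=\int_0^a\rho^{d-1}\int_{\mathbb S^{d-1}}\frac{d\sigma(\eta)}{\|\rho\eta+e_1\|^{d}}\,d\rho .
\]
The key step is the Poisson kernel identity for the unit ball: for $0\le\rho<1$ and any unit vector $e$,
\[
        \int_{\mathbb S^{d-1}}\frac{1-\rho^2}{\|\eta-\rho e\|^{d}}\,d\sigma(\eta)=\sigma_{d-1}.
\]
This holds because the Poisson integral of the constant function $1$ equals $1$ at the interior point $\rho e$. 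For $d=1$ it can be checked directly: $\frac{1-\rho^2}{(1-\rho)}+\frac{1-\rho^2}{(1+\rho)}=2$.

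Applying the identity with $e=-e_1$ and using $\|\rho\eta+e_1\|=\|\eta+\rho e_1\|$, the inner integral equals $\sigma_{d-1}/(1-\rho^2)$. The measure therefore equals $\sigma_{d-1}\int_0^a\rho^{d-1}/(1-\rho^2)\,d\rho=\sigma_{d-1}J_d(a)$, which is \eqref{eq:off-center-ball-volume}.

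The main obstacle is the Poisson identity itself. I would cite it as the standard fact that the Poisson kernel of the ball has unit mass. If a self-contained argument is preferred, I would verify it for $d=1$ by the direct sum above. For $d\ge2$, I would observe that the left side, viewed as a function of the point $\rho e$, is harmonic and radial in the open unit ball. Such a function is constant, and its value at $\rho=0$ is $\sigma_{d-1}$.
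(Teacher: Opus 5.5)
Your proposal is correct and follows the paper's proof essentially step by step: polar coordinates for the annulus, reduction by rotation and dilation invariance to $y=e_1$, polar coordinates centered at $e_1$, and the unit-mass Poisson kernel identity evaluated at the point $-\rho e_1$, with $d=1$ checked by direct summation. Your optional harmonic-and-radial argument for the Poisson identity is a valid self-contained supplement to what the paper simply cites.
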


\begin{proof}
The annulus formula follows from polar coordinates.  The measure $\nu_d$ is
invariant under rotations and positive dilations, so it suffices to prove
\eqref{eq:off-center-ball-volume} for
$y=e_1=(1,0,\ldots,0)$ and $0<r<1$.  Polar coordinates
centered at $e_1$ give
\[
        \nu_d\bigl(\{z\mid\|z-e_1\|<r\}\bigr)
        =
        \int_0^r s^{d-1}
        \left(
        \int_{\mathbb S^{d-1}}
        \frac{d\sigma(\omega)}{\|e_1+s\omega\|^d}
        \right)ds.
\]
The Poisson kernel for the unit ball is
$P(x,\omega)=(1-\|x\|^2)/\|x-\omega\|^d$ and has integral
$\sigma_{d-1}$ over $\mathbb S^{d-1}$.  Taking $x=-se_1$ and using
$\|e_1+s\omega\|=\|\omega+se_1\|$ therefore gives
\[
        \int_{\mathbb S^{d-1}}
        \frac{1-s^2}{\|e_1+s\omega\|^d}\,d\sigma(\omega)
        =\sigma_{d-1},
        \qquad 0\le s<1.
\]
For $d=1$, this identity follows directly by summing over the two points of
$\mathbb S^0$.  Substitution gives
\[
        \nu_d\bigl(\{z\mid\|z-e_1\|<r\}\bigr)
        =\sigma_{d-1}\int_0^r\frac{s^{d-1}}{1-s^2}\,ds
        =\sigma_{d-1}J_d(r),
\]
which proves \eqref{eq:off-center-ball-volume} by scale invariance.
\end{proof}

The next lemma applies this measure identity to the nearest-neighbor
controller.

\begin{lemma}\label{lem:nearest-neighbor-packing}
Fix $d\ge1$ and $0\le\ell<\kappa_d$.
Start from an arbitrary one-step response history $H=(x_0,x_1;r_0)$.  At every
time $t\ge1$, set
\begin{equation}\label{eq:nearest-neighbor-index}
        \delta_t\triangleq\min_{0\le i<t}\|x_t-x_i\|,
        \qquad
        i_t\triangleq
        \min\{i\mid 0\le i<t,\ \|x_t-x_i\|=\delta_t\},
\end{equation}
and apply the control
\begin{equation}\label{eq:nearest-neighbor-control}
        u_t=x_0-r_{i_t}.
\end{equation}
Then every play generated by this controller with
$\Env_\ell^\infty(\pi)<\infty$ has a bounded state sequence.
\end{lemma}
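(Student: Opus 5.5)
The plan is to recentre at $x_0$ and read off a one-step growth inequality from the controller, then rule out escape with a volume budget in the scale-invariant measure $\nu_d$ of Lemma~\ref{lem:scale-invariant-volume}. Write $y_t\triangleq x_t-x_0$ and $B\triangleq\Env_\ell^\infty(\pi)<\infty$. Since $x_{t+1}=u_t+r_t=x_0+r_t-r_{i_t}$ and both $t$ and $i_t$ carry recorded responses in $H^{t+1}$, the envelope bound gives
\[
        \|y_{t+1}\|=\|r_t-r_{i_t}\|\le \ell\|x_t-x_{i_t}\|+B=\ell\delta_t+B,
        \qquad t\ge1 .
\]
So the state can only be pushed far out when the current state $x_t$ is far from every earlier state. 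We argue by contradiction and assume $\sup_t\|y_t\|=\infty$. We may assume $\ell>1$, since $\ell<\kappa_d$ and the claim is monotone in $\ell$.

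\textbf{Step 1 (record times and their empty balls).} Let $t_1<t_2<\cdots$ be the record times, at which $\|y_{t_k}\|$ exceeds all earlier radii, and put $R_k\triangleq\|y_{t_k}\|\to\infty$. Applying the displayed inequality at time $t_k-1$ gives $\delta_{t_k-1}\ge (R_k-B)/\ell$. Hence the state $z_k\triangleq x_{t_k-1}$ is the centre of an open ball of radius $\delta_{t_k-1}$ that contains no state visited before time $t_k-1$. Set $\rho_k\triangleq \delta_{t_k-1}/\|z_k-x_0\|$. Because $\|z_k-x_0\|\le R_{k-1}$ and $\delta_{t_k-1}\le \|z_k-x_0\|$ (the point $x_0$ is an earlier state), we get
\[
        \frac{R_k-B}{\ell R_{k-1}}\le\rho_k\le1 .
\]

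\textbf{Step 2 (disjoint balls and the $\nu_d$ budget).} To each $k$ attach the shrunken ball $D_k\triangleq\{z\mid\|z-z_k\|<\tfrac12\delta_{t_k-1}\}$, viewed in coordinates centred at $x_0$. I will aim to show that the $D_k$ are pairwise disjoint. For $k<m$, the point $z_k$ is visited before $t_m-1$, so $\|z_m-z_k\|\ge\delta_{t_m-1}$. I then need $\tfrac12\delta_{t_k-1}+\tfrac12\delta_{t_m-1}\le\|z_m-z_k\|$, which requires also controlling $\delta_{t_k-1}$ by $\|z_m-z_k\|$.

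\emph{Main obstacle.} This last control is the step I expect to be hardest. My first attempt is to exploit the record structure: $\delta_{t_k-1}\le \|z_k-x_0\|\le R_{k-1}$, while $z_m$ lies near the sphere of radius comparable to $R_{m-1}\ge R_k$. If this is not enough, I would thin the record sequence to a geometric subsequence, or replace the fixed factor $\tfrac12$ by $\min\{\rho_k,\tfrac12\}$ and check disjointness only along that subsequence.

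Given disjointness, every $D_k$ with $k\le n$ lies in the annulus $\{c\le\|z-x_0\|\le 2R_n\}$. Then \eqref{eq:off-center-ball-volume} and \eqref{eq:scale-invariant-annulus-volume} give
\[
        \sum_{k\le n}J_d\bigl(\min\{\rho_k,\tfrac12\}\bigr)\le \log R_n+O(1).
\]

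\textbf{Step 3 (contradiction).} Step~1 gives $\log R_n\le\sum_{k\le n}\log(\ell\rho_k)+o(\log R_n)$. The additive $B$ is negligible because $R_k\to\infty$, and at records with $\rho_k$ small the radius does not grow. It therefore suffices to have a strict per-record inequality
\[
        \log(\ell\rho)\le(1-\varepsilon)\,J_d\bigl(\min\{\rho,\tfrac12\}\bigr)\quad\text{whenever }\ell\rho>1 ,
\]
together with the fact that $\ell\rho\le1$ gives no growth. At $\rho\ge\tfrac12$ this reduces exactly to $\log\ell<J_d(1/2)=\log\kappa_d$, which is where the hypothesis $\ell<\kappa_d$ enters. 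For smaller $\rho$ I would verify the inequality using the convexity of $J_d$ and $J_d(a)\ge a^d/d$; if it fails for small $\rho$, I would group consecutive records so that only large-$\rho$ steps contribute growth. Summing then yields $\log R_n\le(1-\varepsilon)\log R_n+O(1)$, which contradicts $R_n\to\infty$ and proves that the state sequence is bounded.
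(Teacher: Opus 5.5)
There is a genuine gap in Step~2: you never establish that the packing balls are pairwise disjoint, and that is the step the whole argument rests on. You index by records of the radius $\|x_t-x_0\|$ and centre the balls at the pre-record states $z_k=x_{t_k-1}$, so nothing orders the radii $\delta_{t_k-1}$ in $k$. For $k<m$ you only know $\|z_m-z_k\|\ge\delta_{t_m-1}$. Your estimates give just $\delta_{t_k-1}\le R_{k-1}$ and $\delta_{t_m-1}\ge(R_m-B)/\ell$. A radius record may exceed the previous one by an arbitrarily small amount while the preceding empty ball is large. So these bounds allow $\delta_{t_k-1}$ to exceed $\delta_{t_{k+1}-1}$ by a factor close to $\ell>1$, and then $z_{k+1}$ may sit inside the empty ball around $z_k$ at distance less than $\tfrac12(\delta_{t_k-1}+\delta_{t_{k+1}-1})$.

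Your fallbacks do not repair this. Thinning to a geometric subsequence makes each surviving ball pay for the radial growth of a whole block of records, which loses the sharp constant $\kappa_d$. Replacing $\tfrac12$ by $\min\{\rho_k,\tfrac12\}$ changes the size of one ball, not the comparison between two. Two smaller problems also need fixing:
\begin{itemize}
\item The half-radius ball has $\nu_d$-measure $\sigma_{d-1}J_d(\rho_k/2)$, not $\sigma_{d-1}J_d(\min\{\rho_k,\tfrac12\})$.
\item Step~3 needs an additive gain per record, not a factor $(1-\varepsilon)$ with an $o(\log R_n)$ error. Records whose growth comes only from $B$ can add up to a non-negligible part of $\log R_n$. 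The contradiction comes from the number of records times a fixed slack. That slack exists, since $J_d(\rho/2)-\log(\ell\rho)$ is decreasing on $(0,1]$.
\end{itemize}

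The missing idea is to take strict records of the novelty $\delta_t$ itself rather than of the radius. The paper sets $m_k=\delta_{n_k}$ at these records and uses balls of radius $m_k/2$ centred at $y_k=x_{n_k}-x_0$. Disjointness is then immediate: for $i<j$ the state $x_{n_i}$ precedes $x_{n_j}$, so $\|y_j-y_i\|\ge m_j>\tfrac12 m_i+\tfrac12 m_j$. Every $\delta_t$ with $t<n_k$ is at most $m_{k-1}$. So your growth inequality at time $n_k-1$ gives $m_k\le\|y_k\|\le\ell m_{k-1}+A\le q m_{k-1}$ for a fixed $q\in(\max\{1,\ell\},\kappa_d)$. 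Hence each ball has radius-to-centre ratio at least $\lambda_k/(2q)$, where $\lambda_k=m_k/m_{k-1}\in(1,q]$. The function $J_d(\lambda/(2q))-\log\lambda$ is decreasing on $[1,q]$, which gives $\nu_d(D_k)\ge\sigma_{d-1}(\log\lambda_k+\eta)$ with $\eta=\log(\kappa_d/q)>0$. Comparing with the annulus measure $\sigma_{d-1}\log\bigl((2q+1)m_N/m_{k_0}\bigr)$ gives the contradiction.

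If you want to keep your own indexing, shrink the $k$th radius to $\tfrac12\inf_{j\ge k}\delta_{t_j-1}$, which is at least $(R_k-B)/(2\ell)$. This restores disjointness, because $s_k+s_m\le\delta_{t_m-1}\le\|z_m-z_k\|$, and it leads to the same monotone comparison. Some such device is needed.
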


\begin{proof}
\noindent\textit{Step 1. Record sequence.}
Let a generated play satisfy $\Env_\ell^\infty(\pi)\le A<\infty$.  Then, for all
response indices $i,j\ge0$,
\begin{equation}\label{eq:pairwise-response-NN}
        \|r_i-r_j\|\le \ell\|x_i-x_j\|+A.
\end{equation}
By \eqref{eq:nearest-neighbor-control}, the successor relation gives
$x_{t+1}-x_0=r_t-r_{i_t}$.  Hence \eqref{eq:pairwise-response-NN} implies
\begin{equation}\label{eq:novelty-recursion}
        \|x_{t+1}-x_0\|\le \ell\delta_t+A,
        \qquad t\ge1.
\end{equation}

If $(\delta_t)$ is bounded, then \eqref{eq:novelty-recursion} makes the state
sequence bounded.  Assume, for contradiction, that $(\delta_t)$ is unbounded.
Associate a disjoint ball with each strict record.  The next two steps show
that the total $\nu_d$-measure of these balls eventually exceeds the measure
of an annulus containing them.  Enumerate the positive strict record times of
$(\delta_t)$ as $n_1<n_2<\cdots$.  Explicitly,
these are the indices satisfying
\[
        \delta_{n_k}>
        \max\bigl(\{0\}\cup\{\delta_t\mid 1\le t<n_k\}\bigr).
\]
Set
\[
        m_k\triangleq\delta_{n_k},
        \qquad
        y_k\triangleq x_{n_k}-x_0.
\]
Then $0<m_1<m_2<\cdots$, $m_k\to\infty$, and, for $k\ge2$, every
$t<n_k$ satisfies $\delta_t\le m_{k-1}$.

Since $J_d(1/2)>0$, we have $\kappa_d>1$.  Choose
$\max\{1,\ell\}<q<\kappa_d$.
Since $m_k\to\infty$, there is $k_0\ge2$ such that
$A\le(q-\ell)m_{k-1}$ for every $k\ge k_0$.
For such $k$, $x_0$ is one of the states preceding $x_{n_k}$, and therefore
$m_k\le\|y_k\|$.  Applying
\eqref{eq:novelty-recursion} at time $n_k-1$ gives
\begin{equation}\label{eq:record-ratio-upper}
        m_k
        \le \|y_k\|
        \le \ell\delta_{n_k-1}+A
        \le \ell m_{k-1}+A
        \le q m_{k-1}.
\end{equation}
For $k\ge k_0$, define $\lambda_k\triangleq m_k/m_{k-1}$.
Then
\begin{equation}\label{eq:lambda-center-bound}
        1<\lambda_k\le q,
        \qquad
        \frac{\|y_k\|}{m_k}\le\frac{q}{\lambda_k}.
\end{equation}

\medskip
\noindent\textit{Step 2. Measure of the record balls.}
The record points are separated.  If $i<j$, then $x_{n_i}$ was already a
visited state when $x_{n_j}$ appeared, so
$\|y_j-y_i\|\ge\delta_{n_j}=m_j$.
For $k\ge k_0$, define the open record balls
\[
        D_k\triangleq
        \left\{z\in\R^d\mid\|z-y_k\|<\frac{m_k}{2}\right\}.
\]
These balls are pairwise disjoint.  Indeed, if $k_0\le i<j$, then
$m_i/2+m_j/2<m_j\le\|y_j-y_i\|$.
They also stay a positive distance from the origin because
$\|y_k\|\ge m_k$.

By \eqref{eq:lambda-center-bound},
\[
        \frac{m_k/2}{\|y_k\|}\ge\frac{\lambda_k}{2q}.
\]
Lemma~\ref{lem:scale-invariant-volume} and monotonicity of $J_d$ give
\[
        \nu_d(D_k)
        \ge
        \sigma_{d-1}J_d\left(\frac{\lambda_k}{2q}\right).
\]

We compare the measure of each record ball with the logarithmic radial
increment $\log\lambda_k$.  For $1\le\lambda\le q$, let
\[
        F_{d,q}(\lambda)
        \triangleq
        J_d\left(\frac{\lambda}{2q}\right)-\log\lambda.
\]
Differentiation yields
\[
        F_{d,q}'(\lambda)
        =
        \frac{1}{\lambda}
        \left[
        \frac{\lambda^d}
        {(2q)^d\bigl(1-\lambda^2/(4q^2)\bigr)}-1
        \right].
\]
Since $1\le\lambda\le q$,
\[
        \frac{\lambda^d}
        {(2q)^d\bigl(1-\lambda^2/(4q^2)\bigr)}
        \le\frac{4}{3\cdot2^d}<1.
\]
Thus $F_{d,q}$ is strictly decreasing.  Set
\[
        \eta\triangleq J_d(1/2)-\log q
        =\log\frac{\kappa_d}{q}>0.
\]
Then, for every $k\ge k_0$,
\begin{equation}\label{eq:record-ball-key-lower}
        \nu_d(D_k)
        \ge
        \sigma_{d-1}\bigl(\log\lambda_k+\eta\bigr).
\end{equation}

\medskip
\noindent\textit{Step 3. Packing contradiction.}
Fix $N\ge k_0$.  If $z\in D_k$ for some $k_0\le k\le N$, then
\[
        \frac{m_{k_0}}{2}\le\|z\|
        \le\left(q+\frac12\right)m_N.
\]
Indeed, the lower bound follows from $\|y_k\|\ge m_k\ge m_{k_0}$, and the upper
bound follows from \eqref{eq:record-ratio-upper} and
$m_{k-1},m_k\le m_N$.
Hence the disjoint balls $D_{k_0},\ldots,D_N$ lie in this annulus.  By
\eqref{eq:scale-invariant-annulus-volume},
\begin{equation}\label{eq:record-annulus-upper}
        \sum_{k=k_0}^N\nu_d(D_k)
        \le
        \sigma_{d-1}
        \left(\log(2q+1)+\log\frac{m_N}{m_{k_0}}\right).
\end{equation}
On the other hand, summing \eqref{eq:record-ball-key-lower} and using
$\prod_{k=k_0}^N\lambda_k=m_N/m_{k_0-1}$ gives
\begin{equation}\label{eq:record-sum-lower}
        \sum_{k=k_0}^N\nu_d(D_k)
        \ge
        \sigma_{d-1}
        \left(\log\frac{m_N}{m_{k_0-1}}+(N-k_0+1)\eta\right).
\end{equation}
Comparing \eqref{eq:record-annulus-upper} and
\eqref{eq:record-sum-lower} yields
\[
        (N-k_0+1)\eta
        \le\log(2q+1)-\log\lambda_{k_0}.
\]
The right-hand side is independent of $N$, while the left-hand side tends to
infinity.  This contradiction shows that $(\delta_t)$ is bounded, and
\eqref{eq:novelty-recursion} then proves boundedness of the state sequence.
\end{proof}

We now prove the lower bound.  Fix $0\le\ell<\kappa_d$ and an arbitrary
one-step response history $H$.  Lemma~\ref{lem:nearest-neighbor-packing}
provides a controller from $H$ under which every play with finite envelope at
slope $\ell$ has a bounded state sequence.  Hence $H\notin\mathcal U_\ell$.
Proposition~\ref{prop:finite-history-invariance} then gives
$\mathcal U_\ell=\varnothing$.  Since this holds for every
$\ell<\kappa_d$, equation~\eqref{eq:Gamma} yields
$\Gamma_d\ge\kappa_d$.

Moreover,
\[
        J_d(1/2)
        >\int_0^{1/2}t^{d-1}\,dt
        =\frac{1}{d\,2^d}.
\]
Since $e^x>1+x$ for $x>0$, it follows that
$\kappa_d>1+1/(d\,2^d)>1$.

It remains to prove monotonicity.  If $\Gamma_d=\infty$, then
$\Gamma_{d+1}\le\Gamma_d$ is immediate.  Assume $\Gamma_d<\infty$.  Let
$I\colon\R^d\to\R^{d+1}$ be $Iz=(z,0)$, and let
$P\colon\R^{d+1}\to\R^d$ be the projection onto the first $d$ coordinates.
Let $H_*^{(j)}=(0)$ be the history of length zero in dimension $j$, and write
$\mathcal U_\ell^{(j)}$ for the corresponding winning set.  Fix
$\ell>\Gamma_d$.  Equation~\eqref{eq:Gamma} gives
$\mathcal U_\ell^{(d)}\ne\varnothing$, and
Proposition~\ref{prop:finite-history-invariance} then yields
$H_*^{(d)}\in\mathcal U_\ell^{(d)}$.

Let $\bar\sigma$ be an adversary winning strategy from $H_*^{(d)}$ at slope
$\ell$.  For a finite response history $H$ in dimension $d+1$, let $PH$ denote
the history obtained by applying $P$ to all its states and responses.  Define a
$(d+1)$-dimensional adversary strategy from $H_*^{(d+1)}$ by
\[
        \sigma(H,u)=I\,\bar\sigma(PH,Pu).
\]

Fix any $(d+1)$-dimensional controller strategy $\mu$, and let
$\pi=(H^t,u_t,r_t,H^{t+1})_{t\ge0}$ be the play generated by $\mu$ and
$\sigma$.  Write $\bar x_i=Px_i$, $\bar r_i=Pr_i$, and $\bar H^t=PH^t$, and
let
$\bar\pi=(\bar H^t,Pu_t,Pr_t,\bar H^{t+1})_{t\ge0}$.  To compare $\bar\pi$
with the $d$-dimensional game, define a $d$-dimensional controller strategy
$\bar\mu$ on the histories appearing in $\bar\pi$ by
\[
        \bar\mu(\bar H^t)=P\,\mu(H^t),\qquad t\ge0.
\]
After $\mu$ is fixed, the play $\pi$ is uniquely determined.  The histories
$\bar H^t$ have distinct lengths, so these prescriptions are consistent.  Set
$\bar\mu(\bar H)=0$ on all other $d$-dimensional finite response histories.
The completed map $\bar\mu$ is a $d$-dimensional controller strategy, and
$\bar\sigma$ wins against every such strategy.

The play generated by $\bar\mu$ and $\bar\sigma$ from $H_*^{(d)}$ is exactly
$\bar\pi$, because at each time $t$,
\[
\begin{aligned}
        \bar\mu(\bar H^t)&=P\mu(H^t)=Pu_t,\\
        P\sigma(H^t,u_t)
        &=\bar\sigma(PH^t,Pu_t)
          =\bar\sigma(\bar H^t,\bar\mu(\bar H^t)).
\end{aligned}
\]
Since $\bar\sigma$ is winning, $\bar\pi$ has finite envelope at slope $\ell$ and an unbounded state radius.

For all $i,j$,
\[
        \|r_i-r_j\|_{\R^{d+1}}
        =
        \|\bar r_i-\bar r_j\|_{\R^d},
        \qquad
        \|x_i-x_j\|_{\R^{d+1}}
        \ge
        \|\bar x_i-\bar x_j\|_{\R^d}.
\]
Therefore, for every finite time $k$,
\[
        \Env_\ell(H^k)\le \Env_\ell(\bar H^k).
\]
Since $\bar\pi$ has finite envelope at slope $\ell$, the $(d+1)$-dimensional
play $\pi$ also has finite envelope at slope $\ell$.  Its state radius is unbounded
because
$\|x_i-x_0\|_{\R^{d+1}}\ge \|\bar x_i-\bar x_0\|_{\R^d}$ for every $i$.
Since the argument applies to every
$(d+1)$-dimensional controller strategy $\mu$, the strategy $\sigma$ forces
$\Esc_{\ell,H_*^{(d+1)}}$ from $H_*^{(d+1)}$.  Thus
$H_*^{(d+1)}\in\mathcal U_\ell^{(d+1)}$.  Slope monotonicity and
\eqref{eq:Gamma} give $\Gamma_{d+1}\le\ell$.

Since this holds for every $\ell>\Gamma_d$, we conclude $\Gamma_{d+1}\le\Gamma_d$.
This completes the proof of Theorem~\ref{thm:gamma-basic}.

\begin{proof}[Proof of Corollary~\ref{cor:explicit-nn-controller}]
Set $u_0=0$ and use
\eqref{eq:nearest-neighbor-index}--\eqref{eq:nearest-neighbor-control} for
$t\ge1$.  This rule is causal because all past responses are determined by the
observed history.  Fix $0\le L<\kappa_d$,
$f\in\mathcal F_d(L)$, and a disturbance sequence satisfying
$W=\sup_{t\ge1}\|w_t\|<\infty$.  Choose $L<\ell<\kappa_d$.
Lemma~\ref{lem:glnorm-envelope} and \eqref{eq:response} show that the induced
response play $\pi$ satisfies
$\Env_\ell^\infty(\pi)\le A_f+2W$ for some $A_f<\infty$.
Lemma~\ref{lem:nearest-neighbor-packing} therefore proves boundedness of the
closed-loop state sequence.  The feedback rule contains none of $L$, $f$, or
$W$, although the resulting state bound may depend on the initial state, the
plant, and the disturbance bound.
\end{proof}

\section{Conclusion}\label{sec:conclusion}
This paper constructs a response-history escape game for fully coupled
vector-valued systems with bounded disturbances.  Its escape condition requires
a finite envelope and an unbounded state radius.  The main results establish
that the plant problem admits a strict feedback capability threshold, that this
threshold is finite and greater than one, and that it equals the resulting game
value.  In the subcritical regime
$L<\Gamma_d$, a single
causal feedback law stabilizes every plant in $\mathcal F_d(L)$ for every
bounded disturbance sequence.  In the supercritical
regime $L>\Gamma_d$, for every causal feedback law there exist a plant and a
bounded disturbance sequence that make the closed-loop state sequence unbounded.  In
dimension one, comparison with the Xie--Guo theorem gives the scalar constant
$\Gamma_1=\frac32+\sqrt2$.  The game values also satisfy
$\kappa_d\le\Gamma_d\le\Gamma_1$ and are nonincreasing in the dimension.  The
threshold theorem concerns the causal feedback class of Section~\ref{sec:model},
whereas the range $L<\kappa_d$ is achieved by the explicit nearest-neighbor
feedback law in Corollary~\ref{cor:explicit-nn-controller}.  In particular,
$\Gamma_2\ge2/\sqrt3$.  Natural next questions are to
obtain sharper bounds, or a closed-form evaluation, for $\Gamma_d$ in
dimensions $d\ge2$, and to understand whether the scalar upper bound remains
sharp for fully coupled vector-valued systems.

\appendix

\section{One controller for all subcritical slopes}\label{sec:slopefree}
For each fixed slope below $\Gamma_d$, the definition of $\Gamma_d$ and
determinacy give a controller strategy that prevents escape from the initial
history $(0)$.  This appendix combines these strategies into one controller
that is chosen before the slope and the envelope bound of the generated play
are known.  Translation allows the same construction to restart from any
current state.  Section~\ref{sec:proof-threshold} implements the resulting
controller in the plant model.

\begin{lemma}\label{lem:no-slope-controller}
There exists a single causal controller strategy $\mu^*$ with the following
property.  For every initial history $H^0=(x_0)$ with $x_0\in\R^d$ and every
play $\pi$ starting from $H^0$ whose controller moves are prescribed by
$\mu^*$, if
\[
        \Env_\alpha^\infty(\pi)<\infty
        \qquad
        \text{for some }0\le\alpha<\Gamma_d,
\]
then the state sequence in $\pi$ is bounded.
\end{lemma}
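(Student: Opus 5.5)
The plan is a phase-switching controller.

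Each phase runs a winning strategy for one fixed slope. The controller moves to the next phase only when the observed play finitely certifies that the current phase's envelope hypothesis has failed. If $\Gamma_d=0$ there is no admissible $\alpha$, and any strategy works. Otherwise fix slopes $0\le\alpha_1<\alpha_2<\cdots<\Gamma_d$ with $\alpha_n\uparrow\Gamma_d$ (take $\alpha_n=n$ if $\Gamma_d=\infty$). By \eqref{eq:Gamma} and slope monotonicity, $\mathcal U_{\alpha_n}=\varnothing$ for every $n$. By Proposition~\ref{prop:escape-determinacy}, the controller therefore has a winning strategy against $\Esc_{\alpha_n,H}$ from every finite response history $H$. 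In particular, for every $y\in\R^d$ fix such a strategy $\mu_{n,y}$ from the length-zero history $(y)$. (Alternatively, fix one strategy from $(0)$ and translate states, responses and controls by $y$; this leaves $\Env$ and $\Rad$ unchanged.)

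Next I would define $\mu^*$. Phase $1$ starts at time $0$ with restart point $y=x_0$. Suppose phase $n$ started at time $s_n$ with restart point $x_{s_n}$. During this phase the controller applies $\mu_{n,x_{s_n}}$ to the restarted history $\tilde H^{t}=(x_{s_n},\ldots,x_t;r_{s_n},\ldots,r_{t-1})$. The key design choice is the termination test. Phase $n$ ends at the first time $t$ with $\Env_{\alpha_n}(\tilde H^t)>n$. When that happens, phase $n+1$ starts at $s_{n+1}=t$ from the restart point $x_t$. This test involves only finitely many observed quantities, so $\mu^*$ is a causal map of finite response histories. It is also chosen without knowledge of $\alpha$ or of the envelope bound.

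For correctness, let $\pi$ be a play generated by $\mu^*$ with $\Env_\alpha^\infty(\pi)\le A<\infty$ for some $\alpha<\Gamma_d$. Choose $N$ with $\alpha_N>\alpha$ and $N>A$. The restarted histories use only pairs of indices from the full history, so in every phase $n\ge N$ we have
\[
\Env_{\alpha_n}(\tilde H^t)\le\Env_\alpha(\tilde H^t)\le A<n .
\]
Hence no phase $n\ge N$ ever terminates. It follows that only finitely many phases occur, and there is a last phase $m$, which continues forever. Every earlier phase is finite, so the states visited before $s_m$ form a bounded set. In the final phase, $\Env_{\alpha_m}(\tilde H^t)\le m$ for all $t$, because the phase never terminates. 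The tail play from $x_{s_m}$ is consistent with $\mu_{m,x_{s_m}}$, and its envelope is finite at slope $\alpha_m$. Since $\mu_{m,x_{s_m}}$ wins against $\Esc_{\alpha_m,(x_{s_m})}$, this tail cannot have unbounded radius. So the whole state sequence is bounded.

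I expect the main obstacle to be formal bookkeeping rather than an estimate. One must show that the tail after $s_m$ really is a play in $\mathscr P((x_{s_m}))$ compatible with $\mu_{m,x_{s_m}}$. Winning is stated against adversary strategies, while here we have a single play. I would handle this by defining an adversary strategy that replays the observed responses $r_{s_m},r_{s_m+1},\ldots$ along the realized path and acts arbitrarily elsewhere. The unique play of this strategy against $\mu_{m,x_{s_m}}$ is exactly the tail. The other subtle point, which the envelope-threshold test resolves, is that one cannot detect unboundedness in finite time. Switching on radius thresholds would fail for exactly this reason, whereas violation of a fixed envelope bound is detectable at a finite time.
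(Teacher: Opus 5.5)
Your proposal is correct and follows essentially the same route as the paper. Both use a stage-switching controller that runs a winning strategy for design slope $\ell_m\uparrow\Gamma_d$ with envelope budget $B_m=m$, restarts at the current state on the first budget violation, and concludes by replaying the observed responses as an adversary strategy against the last stage's winning strategy. The differences are cosmetic: the paper translates one strategy from $(0)$ instead of fixing $\mu_{n,y}$ for every $y$, which your translation remark covers. You also merge the paper's two cases, an early infinite stage versus reaching stage $m_0$, into a single ``last phase'' argument.
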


\begin{proof}
If $\Gamma_d=0$, the hypothesis $0\le\alpha<\Gamma_d$ is never satisfied, so
any causal controller strategy has the stated property.  Assume from now on
that $\Gamma_d>0$.

Choose an increasing sequence of design slopes below $\Gamma_d$:
\[
        0\le \ell_1<\ell_2<\cdots<\Gamma_d,
        \qquad
        \lim_{m\to\infty}\ell_m=\Gamma_d
        \quad\text{if }\Gamma_d<\infty,
\]
and, if $\Gamma_d=\infty$, choose instead $0\le \ell_1<\ell_2<\cdots$ with
$\ell_m\to\infty$.  For each $m$, equation~\eqref{eq:Gamma} and slope
monotonicity give $\mathcal U_{\ell_m}=\varnothing$.  With
$H_*=(0)$, Proposition~\ref{prop:escape-determinacy} therefore provides a
controller strategy $\mu_m$ that wins from $H_*$ against
$\Esc_{\ell_m,H_*}$.  Set $B_m=m$.  The sequences $(\ell_m)$ and $(B_m)$ and
the strategies $(\mu_m)$ are fixed before any play begins.

\medskip
\noindent\textit{Step 1. Stage definition.}
Stage $1$ starts at time $\tau_1=0$.  Suppose stage $m$ starts at time $\tau_m$.
Set $p_m=x_{\tau_m}$.  For each $k\ge0$ that occurs before the next stage
starts, define the stage subhistory and its translate by
\[
\begin{aligned}
        G_m^k&=(x_{\tau_m},\ldots,x_{\tau_m+k};
        r_{\tau_m},\ldots,r_{\tau_m+k-1}),\\
        \widetilde H_m^k
        &=
        (x_{\tau_m}-p_m,\ldots,x_{\tau_m+k}-p_m;
        r_{\tau_m},\ldots,r_{\tau_m+k-1}).
\end{aligned}
\]
The response lists are empty when $k=0$, so $\widetilde H_m^0=H_*$.  At
global time $t=\tau_m+k$, set
\[
        \widetilde u_k=\mu_m(\widetilde H_m^k),
        \qquad
        u_{\tau_m+k}=p_m+\widetilde u_k.
\]
The successor relation gives
\[
        x_{\tau_m+k+1}-p_m
        =u_{\tau_m+k}+r_{\tau_m+k}-p_m
        =\widetilde u_k+r_{\tau_m+k}.
\]
Hence the translated stage is a play from $H_*$ controlled by $\mu_m$.

Continue this recursion until the stopping index
\[
        k_m^*
        \triangleq
        \inf\bigl\{k\ge1\mid
        \Env_{\ell_m}(\widetilde H_m^k)>B_m\bigr\},
\]
where $\inf\varnothing=\infty$.  If $k_m^*<\infty$, set
$\tau_{m+1}=\tau_m+k_m^*$, so stage $m+1$ starts from the current state
$x_{\tau_{m+1}}$.  If $k_m^*=\infty$, stage $m$ continues forever.  Write
$\widetilde\pi_m$ for its infinite translated play.

These rules define $\mu^*$ recursively on histories consistent with the stage
construction.  The stage decomposition is unique because a new stage begins at
the first budget violation.  Set $\mu^*(H)=0$ on all other histories.  Thus
$\mu^*$ is a causal controller strategy, and a restart changes only its internal
stage data, not the global response history.

We use the following stage boundedness fact.  If stage $m$ is infinite, then
\[
        \Env_{\ell_m}^\infty(\widetilde\pi_m)\le B_m.
\]
Complete the responses observed
along this stage to an adversary strategy by setting
\[
        \sigma_m(\widetilde H_m^k,\widetilde u_k)=r_{\tau_m+k},
        \qquad k\ge0,
\]
and assigning arbitrary values elsewhere.  The visited histories have distinct
lengths, so this prescription is consistent, and the resulting play is
$\widetilde\pi_m$.  Since $\mu_m$ wins against $\Esc_{\ell_m,H_*}$, this play
lies outside the escape objective.  Its finite envelope therefore forces its
translated state sequence to be bounded.

\medskip
\noindent\textit{Step 2. Reduction to stage $m_0$.}
Let $\pi$ denote any play generated by the constructed strategy.  Assume that
\[
        \Env_\alpha^\infty(\pi)\le A<\infty
        \qquad\text{for some }\alpha<\Gamma_d.
\]
By the choice of the design slopes and since $B_m=m\to\infty$, there exists an
index $m_0$ such that
\[
        \ell_m>\alpha,\qquad B_m>A
        \qquad\text{for all }m\ge m_0.
\]
If some stage $m<m_0$ is infinite, then
\[
        \Env_{\ell_m}^\infty(\widetilde\pi_m)\le B_m<\infty.
\]
The stage boundedness fact shows that its translated state sequence is bounded.
Since $x_{\tau_m+k}=p_m+(x_{\tau_m+k}-p_m)$, the original states in that stage
are bounded as well.  The preceding stages contain only finitely many states,
so the full state sequence in $\pi$ is bounded.  It remains to consider the case
in which all stages $m<m_0$ terminate and stage $m_0$ is reached.

\medskip
\noindent\textit{Step 3. The stage-$m_0$ budget is never exceeded.}
For every $k$ reached during stage $m_0$, translation preserves all state and
response differences.  Since $G_{m_0}^k$ is a subhistory of
$H^{\tau_{m_0}+k}$ and $\ell_{m_0}>\alpha$,
\[
\begin{aligned}
        \Env_{\ell_{m_0}}(\widetilde H_{m_0}^k)
        &=\Env_{\ell_{m_0}}(G_{m_0}^k)\\
        &\le\Env_\alpha(G_{m_0}^k)\\
        &\le\Env_\alpha(H^{\tau_{m_0}+k})\\
        &\le A<B_{m_0}.
\end{aligned}
\]
Therefore the stopping condition is never met.  Hence, if stage $m_0$ is
reached, the controller remains there forever.

\medskip
\noindent\textit{Step 4. Boundedness of the state sequence.}
If Step~2 has not already proved boundedness, then stage $m_0$ is reached and
is infinite by Step~3.  In this case,
\[
        \Env_{\ell_{m_0}}^\infty(\widetilde\pi_{m_0})
        \le B_{m_0}<\infty.
\]
The stage boundedness fact shows that its translated state sequence is bounded.
Adding the fixed vector $p_{m_0}$ proves that the original states in the
infinite stage are bounded.  The preceding stages contain only finitely many
states, so the full state sequence in $\pi$ is bounded.
\end{proof}

\section{Realization in a Hilbert space}\label{sec:criticality}
This appendix proves the realization lemma used in the supercritical part of
Theorem~\ref{thm:nf-critical}.  The lemma represents a response play with finite
envelope as a plant trajectory after embedding the visited states in a Hilbert
space and choosing a bounded disturbance sequence.

\begin{lemma}\label{lem:nf-realization}
Assume that $\ell\ge0$ and that a play
$\pi=(H^k,u_k,r_k,H^{k+1})_{k\ge0}$ starts from a history of length zero
$H^0=(x_0)$ and satisfies
\begin{equation}\label{eq:nf-realization-assumption}
        \sup_k \Env_\ell(H^k)\le A<\infty.
\end{equation}
Write $H^k=(x_0,\ldots,x_k;r_0,\ldots,r_{k-1})$.  For $k=0$ the response list
is empty.  Then for every $K>\ell$ there exist $f\colon\R^d\to\R^d$ and a sequence
$(w_t)_{t\ge1}$ such that $f$ is $K$-Lipschitz, $(w_t)_{t\ge1}$ is bounded,
and
\[
        r_i=f(x_i)+w_{i+1},
        \qquad i\ge 0.
\]
Consequently, with the same controls $u_i$, the play's state sequence is
realized by \eqref{eq:plant}, and $\|f\|_{\GL,d}\le K$.
\end{lemma}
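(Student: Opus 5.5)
The plan is to realize the play by a single $K$-Lipschitz map defined on a well-separated subset of the visited states. The disturbances then absorb the bounded mismatch at every other visited state. The extra slack $K-\ell>0$ is what converts the additive envelope $A$ into a genuine Lipschitz bound, provided the points are far enough apart. Replacing $A$ by $\max\{A,1\}$ if necessary, we may assume $A>0$. Set
\[
        s\triangleq\frac{A}{K-\ell}>0 .
\]
If $\|x_i-x_j\|\ge s$, then $\ell\|x_i-x_j\|+A\le K\|x_i-x_j\|$. Hence, by \eqref{eq:nf-realization-assumption} (every pair $i,j$ of response indices appears in some $H^k$), we get $\|r_i-r_j\|\le K\|x_i-x_j\|$ for all such pairs.

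\textbf{Step 1: greedy net.} Scan the indices $i=0,1,2,\ldots$ in order. Put $i$ into an index set $N$ if $\|x_i-x_j\|\ge s$ for every previously selected $j\in N$. The selected points $\{x_j\mid j\in N\}$ are pairwise at distance at least $s$; in particular they are distinct, so repeated states cause no ambiguity. Every visited state $x_i$ lies within distance $s$ of some selected $x_{j(i)}$, by maximality of the greedy choice (if $i\in N$, take $j(i)=i$). Define $g(x_j)\triangleq r_j$ for $j\in N$. By the previous paragraph, $g$ is $K$-Lipschitz on this countable subset of $\R^d$.

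\textbf{Step 2: extension.} By the Kirszbraun--Valentine theorem (Euclidean spaces, arbitrary domain), $g$ extends to a $K$-Lipschitz map $f\colon\R^d\to\R^d$. The paper phrases this via a Hilbert-space lift, but the direct $\R^d\to\R^d$ version suffices here.

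\textbf{Step 3: disturbances.} Set $w_{i+1}\triangleq r_i-f(x_i)$, so that $r_i=f(x_i)+w_{i+1}$ holds by construction. For boundedness, let $j=j(i)$. Then
\[
        \|w_{i+1}\|\le\|r_i-r_j\|+\|f(x_j)-f(x_i)\|\le(\ell s+A)+Ks,
\]
a uniform bound. Here we used $f(x_j)=r_j$, the envelope bound for the pair $(i,j)$, and the Lipschitz property of $f$.

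\textbf{Step 4: realization.} With the same controls $u_i$, we have $x_{i+1}=u_i+r_i=f(x_i)+u_i+w_{i+1}$, which is \eqref{eq:plant}. Since $f$ satisfies $\|f(x)-f(y)\|\le K\|x-y\|+0$, Lemma~\ref{lem:glnorm-envelope} gives $\|f\|_{\GL,d}\le K$.

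The main obstacle is the incompatibility of the data with an exact Lipschitz interpolation. Nearby or coincident states can carry responses that differ by up to $A$, so no Lipschitz $f$ can interpolate all the $r_i$ exactly. The separated-net step resolves this by interpolating only at points where the slack $K-\ell$ absorbs $A$, and pushing the rest into bounded disturbances. The one point to check is that the greedy net covers all visited states within radius $s$, which holds because each rejected point is rejected for being within $s$ of an already selected point.
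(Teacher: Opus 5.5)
Your proof is correct, and it takes a different route from the paper's.

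The paper keeps every data point. It lifts each $x_i$ to $\xi_i=(x_i,\rho\eta_i)$ in $\R^d\oplus\ell_2$ with orthonormal $\eta_i$, so that any two lifted points are at distance at least $\sqrt2\rho$. A completion of squares then shows $\ell s+A\le K\sqrt{s^2+2\rho^2}$ once $\rho\ge A/\sqrt{2(K^2-\ell^2)}$. The paper interpolates all responses exactly by a $K$-Lipschitz map on the lifted set, extends it using Valentine's Hilbert-space form of Kirszbraun's theorem, and pulls back to the slice $(x,0)$. The disturbance $\widetilde G(\xi_i)-\widetilde G((x_i,0))$ is then bounded by $K\rho$.

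You instead discard data. Your greedy net with separation $s=A/(K-\ell)$ is exactly the scale at which the slack $K-\ell$ absorbs the additive constant. You interpolate only on the net, and the covering property gives the uniform bound $\ell s+A+Ks=2KA/(K-\ell)$ at every other index.

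What your version buys:
\begin{itemize}
\item It needs only the classical finite-dimensional Kirszbraun theorem for maps $\R^d\to\R^d$.
\item It makes the disturbance vanish on the net.
\item It makes the role of the slack transparent.
\end{itemize}

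What the paper's version buys:
\begin{itemize}
\item It treats repeated and nearby states uniformly, with no selection step.
\item Its disturbance bound depends on the slack like $(K-\ell)^{-1/2}$ rather than your $(K-\ell)^{-1}$. This is irrelevant here, since the lemma only needs boundedness.
\end{itemize}
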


\begin{proof}
The construction has three steps.  The finite envelope first gives a pairwise
response bound with additive constant $A$.  Coordinates in an auxiliary Hilbert
space then turn that additive constant into a fixed geometric separation.  After a
Lipschitz extension in the lifted space, pulling the map back to the original
slice leaves a bounded disturbance.
Since $\Env_\ell(H^k)\ge0$ for every $k$, the assumption implies $A\ge0$.

\medskip
\noindent\textit{Step 1. Pairwise response bound.}
The assumption \eqref{eq:nf-realization-assumption} implies
\begin{equation}\label{eq:nf-pairwise}
        \|r_i-r_j\|\le \ell \|x_i-x_j\|+A,
        \qquad i,j\ge 0.
\end{equation}
For every pair $i,j$, choose $k\ge \max\{i,j\}+1$.  Then both responses
$r_i,r_j$ and the corresponding states $x_i,x_j$ appear in the finite response history
$H^k$.  By the definition of $\Env_\ell(H^k)$,
\[
        \bigl(\|r_i-r_j\|-\ell\|x_i-x_j\|\bigr)_+
        \le \Env_\ell(H^k)
        \le A.
\]
This gives \eqref{eq:nf-pairwise}.

\medskip
\noindent\textit{Step 2. Separation in a Hilbert space and extension.}
Let $\ell_2(\{0,1,2,\ldots\})$ denote the Hilbert space of square-summable
real sequences indexed by the nonnegative integers.  Set
$\mathcal H=\R^d\oplus \ell_2(\{0,1,2,\ldots\})$, and let
$(\eta_i)_{i\ge0}$ be the standard orthonormal basis of the second factor.
Since $K>\ell$, we have $K^2-\ell^2>0$.  Choose any
\[
        \rho>0,
        \qquad
        \rho\ge \frac{A}{\sqrt{2(K^2-\ell^2)}}.
\]
Define the lifted points $\xi_i=(x_i,\rho \eta_i)\in\mathcal H$.
For every $s\ge 0$ one has
\[
\begin{aligned}
        (\ell s+A)^2-K^2s^2
        &=-(K^2-\ell^2)s^2+2\ell As+A^2\\
        &=-(K^2-\ell^2)
          \left(s-\frac{\ell A}{K^2-\ell^2}\right)^2
          +\frac{A^2K^2}{K^2-\ell^2}\\
        &\le 2K^2\rho^2.
\end{aligned}
\]
The last inequality is exactly the imposed lower bound on $\rho$.  Since both
sides below are nonnegative, it follows that
\[
        \ell s+A\le K\sqrt{s^2+2\rho^2}.
\]

For $i\ne j$, the orthonormality of $\eta_i$ and $\eta_j$ gives
\[
        \|\xi_i-\xi_j\|^2
        =
        \|x_i-x_j\|^2+\rho^2\|\eta_i-\eta_j\|^2
        =
        \|x_i-x_j\|^2+2\rho^2.
\]
Substituting $s=\|x_i-x_j\|$ and using \eqref{eq:nf-pairwise} therefore gives
\[
        \|r_i-r_j\|
        \le
        K\sqrt{\|x_i-x_j\|^2+2\rho^2}
        =
        K\|\xi_i-\xi_j\|.
\]
For $i=j$ the Lipschitz inequality has both sides equal to zero.  Since
$\rho>0$ and the vectors $\eta_i$ are distinct, the lifted points $\xi_i$ are
distinct.  Hence the rule $G(\xi_i)=r_i$ is well defined and satisfies
\[
        \|G(\xi_i)-G(\xi_j)\|
        \le
        K\|\xi_i-\xi_j\|,
        \qquad i,j\ge0.
\]

We use the Kirszbraun--Valentine extension theorem in the following form for
Hilbert spaces: a $K$-Lipschitz map from a subset of one Hilbert space into
another Hilbert space extends to a $K$-Lipschitz map on the whole domain
\cite{Kirszbraun1934,Valentine1945}.  Since $\R^d$ is a Hilbert space, $G$
extends to a $K$-Lipschitz map
$\widetilde G\colon\mathcal H\to\R^d$.

\medskip
\noindent\textit{Step 3. Pullback and disturbance bound.}
Define
\[
        f(x)=\widetilde G((x,0)),
        \qquad
        w_{i+1}=r_i-f(x_i).
\]
For all $x,y\in\R^d$,
\[
        \|f(x)-f(y)\|
        \le
        K\|(x,0)-(y,0)\|_{\mathcal H}
        =
        K\|x-y\|.
\]
Thus $f$ is $K$-Lipschitz, and Lemma~\ref{lem:glnorm-envelope} gives
$\|f\|_{\GL,d}\le K$.  Moreover, for every $i\ge0$,
\[
        \|w_{i+1}\|
        =
        \|\widetilde G(\xi_i)-\widetilde G((x_i,0))\|
        \le
        K\|\xi_i-(x_i,0)\|_{\mathcal H}
        =
        K\rho.
\]
Thus $(w_t)$ is bounded.  By the definition of $w_{i+1}$, the successor
relation becomes
\[
        x_{i+1}=u_i+r_i=u_i+f(x_i)+w_{i+1},
\]
which is exactly the plant dynamics \eqref{eq:plant}.  Hence the
lifted construction realizes the same state and response sequence by a plant
with generalized Lipschitz seminorm at most $K$ and bounded disturbances.
\end{proof}

\section{Borel determinacy for the escape game}\label{app:martin}
This appendix isolates the determinacy input used in the main text.  The
external result is Martin's Borel determinacy theorem.  We verify, in the
notation of this paper, that the escape game is an alternating infinite game
with a Borel winning set.

\subsection{Abstract Borel determinacy}

\begin{definition}
Let $Y$ be the set of possible moves, and write $Y^\omega$ for the set of
infinite sequences $y=(y_0,y_1,\ldots)$ with entries in $Y$.  For a target set
$B\subseteq Y^\omega$, the Gale--Stewart game $G(B,Y)$ is the alternating game
in which Player~I chooses the even-indexed moves $y_0,y_2,\ldots$ and
Player~II chooses the odd-indexed moves $y_1,y_3,\ldots$.  Player~I wins if and
only if the resulting sequence lies in $B$.
\end{definition}

To specify the measurable payoff sets used below, suppose that $Y$ is equipped
with a $\sigma$-algebra $\mathcal B_Y$, whose elements are the measurable
subsets of $Y$.  For each $k\ge0$, define the coordinate projection
\[
        p_k\colon Y^\omega\to Y,
        \qquad
        p_k(y_0,y_1,\ldots)=y_k .
\]
The product $\sigma$-algebra on $Y^\omega$ is
\[
        \mathcal B_Y^{\otimes\omega}
        \triangleq
        \sigma\bigl(\{p_k^{-1}(E)\mid k\ge0,\ E\in\mathcal B_Y\}\bigr),
\]
where
\[
        p_k^{-1}(E)=\{y\in Y^\omega\mid y_k\in E\}
\]
is a cylinder set depending on the $k$th coordinate, and
$\sigma(\mathcal C)$ denotes the $\sigma$-algebra generated by a collection
$\mathcal C$ of sets.  In other words,
$\mathcal B_Y^{\otimes\omega}$ is the smallest collection of subsets of
$Y^\omega$ that contains all such cylinder sets and is closed under complements
and countable unions.  With this choice, every coordinate map $p_k$ is
measurable.  A target set $B$ is measurable for this product structure when
$B\in\mathcal B_Y^{\otimes\omega}$.  When the measurable structure matters,
we write the game as
$G(B,(Y,\mathcal B_Y))$.
For a topological space $X$, let $\mathcal B(X)$ denote its Borel
$\sigma$-algebra.  In the application below, $Y=\R^d$ and
$\mathcal B_Y=\mathcal B(\R^d)$, so
the product $\sigma$-algebra is the Borel $\sigma$-algebra of the product
topology on $(\R^d)^\omega$.  Thus the measurable target sets used below are
Borel.  This is the condition verified before Martin's theorem is applied.

\begin{definition}
In the game $G(B,Y)$, a strategy records what a player chooses
after each finite list of previous moves.  Let $Y^0=\{\emptyset\}$, and for
$m\ge1$ let
\[
        Y^m
        \triangleq
        \{(y_0,\ldots,y_{m-1})\mid y_i\in Y,\ 0\le i<m\}.
\]
Player~I moves after an even number of previous moves, and Player~II moves
after an odd number of previous moves.  Thus the domains of their strategies
are
\[
        \mathcal H_I\triangleq\bigcup_{n\ge0}Y^{2n},
        \qquad
        \mathcal H_{II}\triangleq\bigcup_{n\ge0}Y^{2n+1}.
\]
Strategies for Player~I are maps $\sigma_I\colon\mathcal H_I\to Y$, and
strategies for Player~II are maps $\sigma_{II}\colon\mathcal H_{II}\to Y$.
These maps are not
required to be measurable.  A pair of strategies determines one sequence
$y\in Y^\omega$ by alternating the two rules.  A Player~I strategy is winning
if, against every Player~II strategy, the resulting sequence belongs to $B$.
A Player~II strategy is winning if, against every Player~I strategy, the
resulting sequence does not belong to $B$.  The game is determined if one of
the two players has a winning strategy.
\end{definition}

For the tree formulation, let
\[
        Y^{<\omega}\triangleq\bigcup_{m=0}^{\infty}Y^m.
\]
A tree on $Y$ is a nonempty set $T\subseteq Y^{<\omega}$ that contains every
initial segment of each of its elements.  It is pruned if every element of $T$
is an initial segment of a longer element of $T$.  Its branch space is
\[
        [T]\triangleq
        \{y\in Y^\omega\mid (y_0,\ldots,y_{m-1})\in T
        \text{ for every }m\ge1\}.
\]
For $B\subseteq[T]$, the tree game $G(T,B)$ is the alternating game in which
the players construct a branch of $T$, and Player~I wins exactly when the
resulting branch belongs to $B$.

We use the tree formulation of Martin's Borel determinacy theorem
\cite{Martin1975}.  Section~20.B of \cite{Kechris1995} equips $Y^\omega$ with
the product topology when an arbitrary set $Y$ is given the discrete topology.
In this notation, \cite[Theorem~20.5]{Kechris1995} states that $G(T,B)$ is
determined whenever $T$ is a nonempty pruned tree on $Y$ and $B\subseteq[T]$ is
Borel in the relative topology.  Thus the theorem imposes neither countability
nor Polish structure on $Y$, and real moves need not be coded by integers.

\begin{theorem}\label{thm:martin-rd}
Fix $d\ge1$ and let $B\in\mathcal B(\R^d)^{\otimes\omega}$.
Then the Gale--Stewart game $G(B,(\R^d,\mathcal B(\R^d)))$ is determined.
\end{theorem}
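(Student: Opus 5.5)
The plan is to reduce Theorem~\ref{thm:martin-rd} to the tree form of Borel determinacy, \cite[Theorem~20.5]{Kechris1995}, with the full tree $T=(\R^d)^{<\omega}$. This tree is nonempty and pruned, since every finite sequence extends by any move, and its branch space is $[T]=(\R^d)^\omega$. The tree game $G(T,B)$ coincides with the Gale--Stewart game $G(B,(\R^d,\mathcal B(\R^d)))$. Both have the same strategy domains $\mathcal H_I,\mathcal H_{II}$, the same non-measurable strategies, and the same winning condition. So determinacy of one is literally determinacy of the other. It remains to check that $B$ is Borel in the topology that Kechris uses, namely the product of discrete topologies on $(\R^d)^\omega$.

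The key step is this: every $B\in\mathcal B(\R^d)^{\otimes\omega}$ is Borel for the product-of-discrete topology $\tau_{\mathrm{disc}}$. Let $\mathcal S$ be the collection of sets $B\subseteq(\R^d)^\omega$ that are Borel for $\tau_{\mathrm{disc}}$. It is a $\sigma$-algebra by definition. Hence it suffices to show that $\mathcal S$ contains every generating cylinder $p_k^{-1}(E)$ with $E\in\mathcal B(\R^d)$. This holds trivially: in the discrete topology on $\R^d$ every subset $E$ is open. So $p_k^{-1}(E)$ is a basic open set of $\tau_{\mathrm{disc}}$, since $p_k$ is continuous from the product to the discrete factor. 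Therefore $\mathcal B(\R^d)^{\otimes\omega}\subseteq\mathcal S$. In other words, $\tau_{\mathrm{disc}}$ refines the usual product topology, and Borel sets only grow.

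The main point needing care is not this inclusion but confirming the bookkeeping between the two formulations. First, Kechris's games require Borelness relative to $[T]$, which here is the whole space. Second, his players alternate starting with Player~I, matching the even/odd convention above. Third, his strategies are arbitrary maps on finite positions, so no measurability of strategies is needed. Once these identifications are recorded, \cite[Theorem~20.5]{Kechris1995}, which is Martin's theorem \cite{Martin1975}, gives that one of the players has a winning strategy. That is exactly determinacy of $G(B,(\R^d,\mathcal B(\R^d)))$.

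The statement asks only for determinacy, not for "exactly one," so nothing further is needed. If uniqueness were wanted, it would follow by playing the two winning strategies against each other, and that is used later for Proposition~\ref{prop:escape-determinacy}.
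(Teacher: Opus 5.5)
Your proposal is correct and follows the paper's proof essentially step for step. Like the paper, you take the full tree $T=(\R^d)^{<\omega}$ with $[T]=(\R^d)^\omega$, identify the tree game with the Gale--Stewart game, and note that every cylinder $p_k^{-1}(E)$ is open in the product of discrete topologies, so $\mathcal B(\R^d)^{\otimes\omega}$ lies inside the Borel sets to which Kechris's Theorem~20.5 (Martin's theorem) applies.
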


\begin{proof}
Set $Y=\R^d$ and $T=Y^{<\omega}$.
Thus $T$ is the set of all finite sequences of elements of $Y$.  This tree is
nonempty and pruned, because every finite sequence can be extended by appending
one more element of $Y$.  Because $T$ contains every finite sequence of
elements of $Y$, its branch space is $[T]=Y^\omega$.  Thus the tree game with target set $B$ has the same
moves and the same winner as $G(B,Y)$.

It remains to compare the two Borel structures.  Let $\R^d_{\mathrm{disc}}$
denote $\R^d$ with the discrete topology used in
\cite[Theorem~20.5]{Kechris1995}.  Every Borel subset $E\subseteq\R^d$ is open
in $\R^d_{\mathrm{disc}}$.  Hence each cylinder
$p_k^{-1}(E)$ is open in $(\R^d_{\mathrm{disc}})^\omega$, and therefore
\[
        \mathcal B(\R^d)^{\otimes\omega}
        \subseteq
        \mathcal B\bigl((\R^d_{\mathrm{disc}})^\omega\bigr).
\]
Thus every admissible target set $B$ is Borel in the topology required by the
cited theorem.

That theorem gives determinacy of $G(B,Y)$, and therefore of
$G(B,(\R^d,\mathcal B(\R^d)))$ with the measurable structure specified above.
\end{proof}

\subsection{The escape game as a Borel game}\label{app:escape-game-application}

\begin{proof}[Proof of Proposition~\ref{prop:escape-determinacy}]
We identify the escape game with the abstract Gale--Stewart game from
Theorem~\ref{thm:martin-rd}.  Fix a finite response history $H^0$ and a slope
$\ell\ge0$.  The proof checks the coding of plays, the coding of strategies,
and the Borel measurability of the coded escape set.

Set $Y=\R^d$ and use the Euclidean Borel $\sigma$-algebra
$\mathcal B(\R^d)$.
Write the fixed initial history as
\[
        H^0=(\bar x_0,\ldots,\bar x_q;\bar r_0,\ldots,\bar r_{q-1}),
\]
where $q\ge0$ and the response list is empty when $q=0$.

First encode plays by a sequence
$y=(y_0,y_1,\ldots)\in(\R^d)^\omega$, and set
\[
        u_t=y_{2t},
        \qquad
        \hat r_t=y_{2t+1}.
\]
Starting from $H^0$, these moves generate a unique play $H^0,H^1,H^2,\ldots$ by
\[
        H^{t+1}=(H^t)^+(u_t,\hat r_t).
\]
Let $\Pi_{H^0}(y)$ denote this play.  Conversely, once $H^0$ is fixed, the
generated histories determine the same sequence $y$.  Thus $y$ is a one-to-one
coding of the original play from $H^0$.

Next compare strategies.  At the even move $y_{2t}$, Player~I has seen
$y_0,\ldots,y_{2t-1}$, from which the current response history $H^t$ is
reconstructed.  Conversely, because the initial history is fixed, any finite
response history extending $H^0$ determines the previous moves.  Here
``extends'' means that
the first $q+1$ states and the first $q$ responses agree with $H^0$.  If such a
history has state list $(z_0,\ldots,z_{q+t})$, then its last $t$ responses are
$\hat r_0,\ldots,\hat r_{t-1}$.  Its previous moves are recovered from
\[
        y_{2i}=z_{q+i+1}-\hat r_i,
        \qquad
        y_{2i+1}=\hat r_i,
        \qquad 0\le i<t.
\]
Indeed, $y_{2i}=u_i$ and the successor relation is
$z_{q+i+1}=u_i+\hat r_i$.

Thus a controller strategy $\mu$ gives a Player~I strategy by
\[
        \sigma_I(y_0,\ldots,y_{2t-1})=\mu(H^t).
\]
Conversely, a Player~I strategy $\sigma_I$ gives a controller strategy on
histories extending $H^0$ as follows.  If $H$ extends $H^0$ and has length
$q+t$, reconstruct the previous move sequence $y_0,\ldots,y_{2t-1}$ from $H$
and set
\[
        \mu(H)=\sigma_I(y_0,\ldots,y_{2t-1}).
\]
Extending this prescription by the value $0$ on the remaining histories gives a
total causal controller strategy.

The adversary side is analogous.  At the odd move $y_{2t+1}$, Player~II has
seen $y_0,\ldots,y_{2t}$, which determines and is determined by $(H^t,u_t)$.
Hence an adversary strategy $\sigma$ gives a Player~II strategy by
\[
        \sigma_{II}(y_0,\ldots,y_{2t})
        =
        \sigma(H^t,u_t).
\]
Conversely, a Player~II strategy $\sigma_{II}$ defines an adversary strategy on
pairs $(H,u)$ with $H$ extending $H^0$ as follows.  If $H$ has length $q+t$,
reconstruct the previous move sequence $y_0,\ldots,y_{2t-1}$ from $H$ and set
\[
        \sigma(H,u)=\sigma_{II}(y_0,\ldots,y_{2t-1},u).
\]
Extending this prescription by the value $0$ on the remaining pairs gives a
total adversary strategy.
Therefore the abstract strategies and the strategies in the escape game induce the
same plays from $H^0$.

Define
\[
        D_{H^0,\ell}
        \triangleq
        \{y\in(\R^d)^\omega\mid\Pi_{H^0}(y)\in\Esc_{\ell,H^0}\},
        \qquad
        C_{H^0,\ell}\triangleq(\R^d)^\omega\setminus D_{H^0,\ell}.
\]
The set $C_{H^0,\ell}$ is the controller target, because Player~I in
$G(C_{H^0,\ell},(\R^d,\mathcal B(\R^d)))$ is the first mover and hence
represents the controller.

It remains to check that $D_{H^0,\ell}$ is Borel.  For each fixed $t$, the
finite response history $H^t(y)$ is obtained from $H^0$ and
$y_0,\ldots,y_{2t-1}$ by finitely many additions.  Therefore
$\Env_\ell(H^t(y))$ and $\Rad(H^t(y))$ are maxima of finitely many continuous
functions of the finitely many coordinates $y_0,\ldots,y_{2t-1}$.  A finite
maximum of continuous functions is continuous, and the sets where a continuous
function is at most or at least a fixed number are Borel.  Hence, for each
integer $N\ge1$ and each integer $m\ge1$, the sets
\[
        E_{t,N}\triangleq\{y\in(\R^d)^\omega\mid\Env_\ell(H^t(y))\le N\},
        \qquad
        R_{t,m}\triangleq\{y\in(\R^d)^\omega\mid\Rad(H^t(y))\ge m\}
\]
are Borel subsets of $(\R^d)^\omega$.

The set of sequences with finite envelope is
\[
        \{y\in(\R^d)^\omega\mid
        \Env_\ell^\infty(\Pi_{H^0}(y))<\infty\}
        =
        \bigcup_{N=1}^{\infty}\bigcap_{t=0}^{\infty}E_{t,N}.
\]
The set of sequences whose state radius is unbounded is
\[
        \{y\in(\R^d)^\omega\mid\sup_t\Rad(H^t(y))=\infty\}
        =
        \bigcap_{m=1}^{\infty}\bigcup_{t=0}^{\infty}R_{t,m}.
\]
Therefore the coded escape set is
\[
        D_{H^0,\ell}
        =
        \left(\bigcup_{N=1}^{\infty}\bigcap_{t=0}^{\infty}E_{t,N}\right)
        \cap
        \left(\bigcap_{m=1}^{\infty}\bigcup_{t=0}^{\infty}R_{t,m}\right),
\]
which is Borel because Borel sets are closed under countable unions and
countable intersections.  Hence $C_{H^0,\ell}$ is Borel as well.

Theorem~\ref{thm:martin-rd} applies to
$G(C_{H^0,\ell},(\R^d,\mathcal B(\R^d)))$.  If Player~I wins, the controller
forces the play outside $\Esc_{\ell,H^0}$.  If Player~II wins, the adversary
forces $\Esc_{\ell,H^0}$.  The two alternatives are mutually exclusive, because
any fixed pair of strategies determines one play.  This proves the stated
determinacy assertion.
\end{proof}

\section{Independence of the initial response history}
\label{app:finite-initial-histories}
This appendix proves Proposition~\ref{prop:finite-history-invariance}.

\begin{proof}
Fix $\ell\ge0$.  The conclusion is immediate if
$\mathcal U_\ell=\varnothing$.  Otherwise, choose
\[
        H'=(y_0,\ldots,y_m;q_0,\ldots,q_{m-1})\in\mathcal U_\ell,
\]
and let
\[
        H=(x_0,\ldots,x_n;r_0,\ldots,r_{n-1})
\]
be an arbitrary finite response history.  Let $\sigma'$ be an adversary
strategy that wins from $H'$ for $\Esc_{\ell,H'}$, and set $c=y_m-x_n$.

For a finite extension $K$ of $H$, let $j\ge0$ be the number of added
responses, set $X_0=x_n$, and write the added states and responses as
\[
        X_1,\ldots,X_j
        \qquad\text{and}\qquad
        a_0,\ldots,a_{j-1},
\]
respectively.  Both lists are empty when $j=0$.  Define $\Phi(K)$ to be the
extension of $H'$ by the states
\[
        Y_k\triangleq X_k+c,\qquad 1\le k\le j,
\]
and the same response list.  Thus $Y_0=y_m$.  Whenever $K$
extends $H$, define
\[
        \sigma_H(K,u)\triangleq\sigma'(\Phi(K),u+c),
\]
and define $\sigma_H$ arbitrarily on all other histories.  This defines a
causal adversary strategy that depends only on $H$, $H'$, and $\sigma'$.

Fix an arbitrary controller strategy $\mu$.  On the image of $\Phi$, define
\[
        \mu'(\Phi(K))\triangleq\mu(K)+c,
\]
and define $\mu'$ arbitrarily elsewhere.  The map $\Phi$ is injective, so
$\mu'$ is well defined.  These definitions show inductively that the plays
generated by $(\mu,\sigma_H)$ from $H$ and by $(\mu',\sigma')$ from $H'$ have
the same appended responses and satisfy
\[
        Y_j=X_j+c,\qquad j\ge0.
\]

Let $\pi'$ denote the simulated play.  Since
$\pi'\in\Esc_{\ell,H'}$, its radius is unbounded and
$B'\triangleq\Env_\ell^\infty(\pi')<\infty$.  The initial history $H'$ is
finite, so the states $Y_j$ have unbounded distance from $y_0$.  Since
\[
        \|X_j-x_0\|
        \ge \|Y_j-y_0\|-\|x_0+c-y_0\|,
\]
the radius of the actual play is also unbounded.  For all $j,k\ge0$, the
responses in the appended tail satisfy
\[
        \|a_j-a_k\|
        \le \ell\|Y_j-Y_k\|+B'
        =\ell\|X_j-X_k\|+B'.
\]
Pairs of responses contained in $H$ satisfy the envelope bound
$\Env_\ell(H)$.  If $n\ge1$, then, for $0\le i<n$ and $j\ge0$,
\begin{align*}
        \|a_j-r_i\|
        &\le \|a_j-a_0\|+\|a_0-r_i\|\\
        &\le \ell\|X_j-x_i\|
        +B'+\|a_0-r_i\|+\ell\|x_i-X_0\|.
\end{align*}
Thus all response pairs in the actual play satisfy the envelope bound with
the finite constant
\[
        B\triangleq
        \max\left\{
        B',\ \Env_\ell(H),
        \max_{0\le i<n}
        \bigl(B'+\|a_0-r_i\|+\ell\|x_i-X_0\|\bigr)
        \right\}.
\]
This constant may depend on the resulting play, as permitted by the definition
of the escape objective.
If $n=0$, the history $H$ contains no response, and one may take $B=B'$.
Thus the actual play belongs to $\Esc_{\ell,H}$.  Since $\mu$ was arbitrary,
$\sigma_H$ wins from $H$, and hence $H\in\mathcal U_\ell$.

Since $H$ was arbitrary, every nonempty $\mathcal U_\ell$ contains all finite
response histories.  This proves
Proposition~\ref{prop:finite-history-invariance}.
\end{proof}

\bibliographystyle{siamplain}
\bibliography{references}

\end{document}